\definecolor{citegreen}{rgb}{0,0.6,0}
\definecolor{refred}{rgb}{0.8,0,0}
\title{Monotone volume formulas for geometric flows}%
\author{Reto M\"{u}ller}
\date{}
\providecommand{\abs}[1]{\left\lvert #1\right\rvert}%
\providecommand{\scal}[1]{\left\langle #1\right\rangle}%
\DeclareMathOperator{\Rm}{Rm}%
\DeclareMathOperator{\Ric}{Ric}%
\DeclareMathOperator{\tr}{tr}%
\DeclareMathOperator{\Hess}{Hess}%
\newcommand{\D}{\nabla}%
\newcommand{\Lap}{\triangle}%
\newcommand{\eps}{\varepsilon}%
\newcommand{\RR}{\mathbb{R}}%
\newcommand{\dt}{\mathop{\partial_t}}%
\newcommand{\dte}{\mathop{\partial_{t_1}}\!}%
\newcommand{\ds}{\mathop{\partial_s}}%
\newcommand{\dtau}{\mathop{\partial_\tau}}%
\newcommand{\dl}{\mathop{\partial_\lambda}}%
\newcommand{\sL}{\mathcal{L}}%
\newcommand{\sS}{\mathcal{S}}%
\newcommand{\sH}{\mathcal{H}}%
\newcommand{\sD}{\mathcal{D}}%
\newcommand{\hap}{\mathbin{\hat+}}%
\newcommand{\ham}{\mathbin{\hat-}}%
\newtheoremstyle{break}%
  {12pt}%
  {16pt}%
  {\itshape}%
  {}%
  {\bfseries}%
  {}%
  {\newline}%
  {\thmname{#1}\thmnumber{ #2}\thmnote{ \normalfont{(#3)}}}%
\theoremstyle{break}%
\newtheorem{lemma}{Lemma}[section]%
\newtheorem{theorem}[lemma]{Theorem}%
\newtheorem{definition}[lemma]{Definition}%
\numberwithin{equation}{section}%
\begin{document}%
\maketitle%
\pagenumbering{arabic}%
\vspace{-50mm}
J. reine angew. Math. \textbf{643} (2010), 39--57\\
DOI 10.1515/CRELLE.2010.044\\%
\vspace{35mm}
\begin{abstract}
We consider a closed manifold $M$ with a Riemannian metric
$g_{ij}(t)$ evolving by $\dt g_{ij}=-2S_{ij}$ where $S_{ij}(t)$ is a
symmetric two-tensor on $(M,g(t))$. We prove that if $S_{ij}$
satisfies the tensor inequality $\sD(S_{ij},X)\geq 0$ for all vector
fields $X$ on $M$, where $\sD(S_{ij},X)$ is defined in
(\ref{1.eqD}), then one can construct a forwards and a backwards
reduced volume quantity, the former being non-increasing, the latter
being non-decreasing along the flow $\dt g_{ij}=-2S_{ij}$. In the
case where $S_{ij} = R_{ij}$, the Ricci curvature of $M$, the result
corresponds to Perelman's well-known reduced volume monotonicity for
the Ricci flow presented in \cite{Perelman:entropy}. Some other
examples are given in the second section of this article, the main
examples and motivation for this work being List's extended Ricci
flow system developed in \cite{List:diss}, the Ricci flow coupled
with harmonic map heat flow presented in
\cite{Muller:RicciHarmonic}, and the mean curvature flow in
Lorentzian manifolds with nonnegative sectional curvatures. With our
approach, we find new monotonicity formulas for these flows.
\end{abstract}
\bigskip
%
%
\section{Introduction and formulation of the main result}\label{Sect1}
Let $M$ be a closed manifold with a time-dependent Riemannian metric
$g_{ij}(t)$. Let $\sS(t)$ be a symmetric two tensor on $(M,g(t))$
with components $S_{ij}(t)$ and trace $S(t) := \tr_{g(t)}\sS(t) =
g^{ij}(t)S_{ij}(t)$. Assume that $g(t)$ evolves according to the
flow equation
\begin{equation}\label{1.eq1}
\dt g_{ij}(t) = -2 S_{ij}(t).
\end{equation}
A typical example would be the case where $\sS(t) = \Ric(t)$ is the
Ricci tensor of $(M,g(t))$ and the metric $g(t)$ is a solution to
the Ricci flow, introduced by Richard Hamilton in
\cite{Hamilton:3folds}. Other examples are given in Section \ref{Sect2} of
this article.\\\\%
In analogy to Perelman's $\sL$-distance for the Ricci flow defined
in \cite{Perelman:entropy}, we will now introduce forwards and
backwards reduced distance functions for the flow (\ref{1.eq1}), as
well as a forwards and a backwards reduced volume.
\begin{definition}[forwards reduced distance and volume]\label{1.defF}
Suppose that (\ref{1.eq1}) has a solution for $t \in [0,T]$. For
$0\leq t_0\leq t_1\leq T$ and a curve $\gamma:[t_0,t_1]\to M$, we
define the $\sL_f$-length of $\gamma(t)$ by
\begin{equation*}
\sL_f(\gamma) := \int_{t_0}^{t_1}\sqrt{t}\left(S(\gamma(t)) +
\abs{\dt \gamma(t)}^2 \right)dt.
\end{equation*}
For a fixed point $p\in M$ and $t_0=0$, we define the forwards
reduced distance
\begin{equation}\label{1.eqLf}
\ell_f(q,t_1):= \inf_{\gamma \in
\Gamma}\left\{\frac{1}{2\sqrt{t_1}}\int_0^{t_1}\sqrt{t}
\left(S+\abs{\dt \gamma}^2\right)dt\right\},
\end{equation}
where $\Gamma= \{\gamma:[0,t_1]\to M \mid
\gamma(0)=p,\,\gamma(t_1)=q\}$, i.e.~the forwards reduced distance
is the $\sL_f$-length of an $\sL_f$-shortest curve times
$\frac{1}{2\sqrt{t_1}}$. Existence of such $\sL_f$-shortest curves
will be discussed in the fourth section. Finally, the forwards
reduced volume is defined to be
\begin{equation}\label{1.eqVf}
V_f(t) := \int_M (4\pi t)^{-n/2}e^{\ell_f(q,t)} dV(q).
\end{equation}
\end{definition}
In order to define the backwards reduced distance and volume, we
need a backwards time $\tau(t)$ with $\dt \tau(t) = -1$. Without
loss of generality, one may assume (possibly after a time shift)
that $\tau = -t$.
\begin{definition}[backwards reduced distance and volume]\label{1.defB}
If (\ref{1.eq1}) has a solution for $\tau \in [0,\bar{\tau}]$ we
define the $\sL_b$-length of a curve $\gamma:[\tau_0,\tau_1]\to M$
by
\begin{equation*}
\sL_b(\gamma) :=
\int_{\tau_0}^{\tau_1}\sqrt{\tau}\left(S(\gamma(\tau)) + \abs{\dtau
\gamma(\tau)}^2 \right)d\tau.
\end{equation*}
Again, we fix the point $p\in M$ and $\tau_0=0$ and define the
backwards reduced distance by
\begin{equation}\label{1.eqLb}
\ell_b(q,\tau_1):= \inf_{\gamma \in
\Gamma}\left\{\frac{1}{2\sqrt{\tau_1}}\int_0^{\tau_1}\sqrt{\tau}
\left(S+\abs{\dtau \gamma}^2\right)d\tau\right\},
\end{equation}
where now $\Gamma= \{\gamma:[0,\tau_1]\to M \mid
\gamma(0)=p,\,\gamma(\tau_1)=q\}$. The backwards reduced volume is
defined by
\begin{equation}\label{1.eqVb}
V_b(\tau) := \int_M (4\pi\tau)^{-n/2}e^{-\ell_b(q,\tau)}dV(q).
\end{equation}
\end{definition}
Next, we define an evolving tensor quantity $\sD$ associated to the
tensor $\sS$.
\begin{definition}\label{1.defD}
Let $g(t)$ evolve by $\dt g_{ij}=-2S_{ij}$ and let $S$ be the trace
of $\sS$ as above. Let $X \in \Gamma(TM)$ be a vector field on $M$.
We set
\begin{equation}\label{1.eqD}
\begin{split}
\sD(\sS,X) &:= \dt S-\Lap S -2\abs{S_{ij}}^2
+4(\D_i S_{ij})X_j -2(\D_j S)X_j\\%
&\phantom{:}\quad + 2R_{ij}X_iX_j - 2S_{ij}X_iX_j,
\end{split}
\end{equation}
\end{definition}
\emph{Remark.} The quantity $\sD$ consists of three terms. The first
term, $\dt S-\Lap S -2\abs{S_{ij}}^2$, captures the evolution
properties of $S = g^{ij}S_{ij}$ under the flow (\ref{1.eq1}). The
second one, $4(\D_i S_{ij})X_j - 2(\D_j S)X_j$, is a multiple of the
error term $E$ that appears in the twice traced second Bianchi type
identity $\D_i S_{ij} = \tfrac{1}{2}\D_j S + E$ for the symmetric
tensor $\sS$. Finally, the last term directly compares the tensor
$S_{ij}$ with the Ricci tensor.\\\\%
We can now state our main result.
\begin{theorem}[monotonicity of forwards and backwards reduced volume]\label{1.mainthm}%
Suppose that $g(t)$ evolves by (\ref{1.eq1}) and the quantity
$\sD(\sS,X)$ is nonnegative for all vector fields $X \in \Gamma(TM)$
and all times $t$ for which the flow exists. Then the
\emph{forwards} reduced volume $V_f(t)$ is non-increasing in $t$
along the flow. Moreover, the \emph{backwards} reduced volume
$V_b(\tau)$ is non-increasing in $\tau$, i.e.~non-decreasing in $t$.
\end{theorem}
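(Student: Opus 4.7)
The plan is to adapt Perelman's monotonicity argument for the Ricci flow to the more general setting of (\ref{1.eq1}). I would carry out the backwards case in detail (since it parallels Perelman's setup most closely) and then indicate how the forwards case follows by a nearly identical calculation with certain signs reversed. Structurally there are four stages: first, derive the $\sL_b$-geodesic equation from the first variation of $\sL_b$; second, compute the first and second variations to obtain pointwise differential identities for $\dtau\ell_b$, $\abs{\D\ell_b}^2$ and $\Lap\ell_b$; third, combine these into a conjugate heat type inequality for $v_b := (4\pi\tau)^{-n/2}e^{-\ell_b}$; fourth, integrate over $M$, using $\dtau dV = S\,dV$ under (\ref{1.eq1}), to obtain monotonicity of $V_b(\tau)$.

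To carry out the first stage, I would compute the first variation of $\sL_b(\gamma) = \int_0^{\tau_1} \sqrt{\tau}(S + \abs{\dtau\gamma}^2)d\tau$, keeping careful track of the $\tau$-dependence of the metric inside $\abs{\dtau\gamma}^2$. This should yield an $\sL_b$-geodesic equation of the form
\begin{equation*}
\D_{\dtau\gamma}\dtau\gamma - \tfrac{1}{2}\D S + 2\sS(\dtau\gamma,\cdot)^{\sharp} + \tfrac{1}{2\tau}\dtau\gamma = 0,
\end{equation*}
reducing to Perelman's $\sL$-geodesic equation when $\sS = \Ric$. Granting the existence of $\sL_b$-minimizers (to be justified in the fourth section), $\ell_b$ is locally Lipschitz on $M$ and smooth off a measure-zero cut locus, and the smooth $\sL_b$-exponential map together with its Jacobian is available on the complement.

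Using the first variation for moving endpoints and the second variation for the Laplacian, I would derive pointwise formulas for $\dtau\ell_b$, $\abs{\D\ell_b}^2$ and $\Lap\ell_b$. Combining them, a direct computation gives
\begin{equation*}
\dtau v_b - \Lap v_b + S\,v_b = -v_b\Bigl(\dtau\ell_b - \Lap\ell_b + \abs{\D\ell_b}^2 - S + \tfrac{n}{2\tau}\Bigr),
\end{equation*}
so that the main analytic task reduces to showing that the bracket is non-negative. To do this I would differentiate $\tau^{3/2}(S + \abs{\dtau\gamma}^2)$ along an $\sL_b$-geodesic and rearrange; modulo a manifestly non-positive Cauchy-Schwarz defect from the trace of the second variation, the right-hand side should collect into exactly the tensor $\sD(\sS,\dtau\gamma)$ of (\ref{1.eqD}). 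The three groups of terms in $\sD$ ought to correspond respectively to the evolution of $S$ under (\ref{1.eq1}), to the Bianchi-type error commented on after Definition \ref{1.defD}, and to the replacement of $\Ric$ (which arises from a Bochner-type step) by $\sS$ itself. The hypothesis $\sD \geq 0$ then delivers the required inequality.

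Once the conjugate heat inequality is in hand, monotonicity of $V_b$ follows by integration over $M$:
\begin{equation*}
\tfrac{d}{d\tau}V_b(\tau) = \int_M(\dtau v_b + S\,v_b)\,dV \leq \int_M \Lap v_b\,dV = 0,
\end{equation*}
and the forwards case is handled by the same scheme in the variable $t$, with $e^{+\ell_f}$ in place of $e^{-\ell_b}$; the sign of the exponent and the opposite sign in $\dt dV = -S\,dV$ combine to give $\tfrac{d}{dt}V_f(t) \leq 0$. I expect the principal difficulty to be the lengthy algebraic verification that the non-negative combination coming out of the second-variation computation is precisely the tensor $\sD(\sS,X)$ of Definition \ref{1.defD}: this is exactly where the definition of $\sD$ must be finely tailored so that the Perelman argument closes. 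A secondary but standard issue is to legitimise the integration by parts despite the $\ell_b$ cut locus, either via an upper barrier argument or, equivalently, by transferring the computation along $\sL_b$-geodesics and proving pointwise monotonicity of $v_b$ times the Jacobian of the $\sL_b$-exponential map in $\tau$, as in \cite{Perelman:entropy}.
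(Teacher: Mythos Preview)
Your proposal is correct and follows essentially the same route as the paper: derive the $\sL$-geodesic equation, compute first and second variations to obtain formulas for $\abs{\D\ell}^2$, $\partial\ell$, and $\Lap\ell$, combine them into a (conjugate) heat-type inequality for $v=(4\pi t)^{-n/2}e^{\pm\ell}$, and integrate. The paper's only organizational refinement over your sketch is that it packages the computation through the Harnack-type quantities $\sH(\sS,X)$ and $\sH(\sS,X,Y)$ of Definition~\ref{1.defH}: differentiating $t^{3/2}(S+\abs{X}^2)$ along a geodesic produces the trace expression $\sH(\sS,\ham X)$, while tracing the second variation over a suitable orthonormal frame produces $\sum_i\sH(\sS,\ham X,e_i)$; Lemma~\ref{1.lemmaD} then identifies their difference as exactly $\sD(\sS,\ham X)$, so the hypothesis $\sD\geq 0$ is what allows the Laplacian estimate to close with the same integral $K$ that already appears in the gradient and time-derivative formulas. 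Your ``Cauchy--Schwarz defect'' is not quite the right mechanism here; the inequality in the Laplacian estimate comes from comparing the Hessian with the second variation along a specific test field (solving an ODE along $\gamma$), not from Cauchy--Schwarz, and the sole place the hypothesis enters is in dropping the nonnegative $\sD$-term via Lemma~\ref{1.lemmaD}. The cut-locus issue is handled in the paper by an upper-barrier argument passing to the distributional sense, one of the two options you listed.
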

The remainder of this work is organized as follows. In the next
section, we consider some examples where Theorem \ref{1.mainthm} can
be applied. In Section \ref{Sect3}, we start the proof of the theorem by
showing that the quantity $\sD(\sS,X)$ is the difference between two
differential Harnack type quantities for the tensor $\sS$ defined as
follows.
\begin{definition}\label{1.defH}
For two tangent vector fields $X,Y \in \Gamma(TM)$ on $M$, we define
\begin{align*}
\sH(\sS,X,Y)&:= 2(\dt \sS)(Y,Y)+2\abs{\sS(Y,\cdot)}^2-\D_Y\D_YS +
\tfrac{1}{t}\sS(Y,Y)\\%
&\quad -4(\D_X\sS)(Y,Y)+4(\D_Y\sS)(X,Y)-2\scal{\Rm(X,Y)X,Y},\\%
\sH(\sS,X)&:= \dt S + \tfrac{1}{t}S - 2\scal{\D S,X} + 2\sS(X,X).
\end{align*}
\end{definition}
\begin{lemma}\label{1.lemmaD}
The quantity $\sD(\sS,X)$ is the difference between the trace of
$\sH(\sS,X,Y)$ with respect to the vector field $Y$ and the
expression $\sH(\sS,X)$, i.e.~for an orthonormal basis $\{e_i\}$, we
have
\begin{equation*}
\sD(\sS,X) = \sum_i \sH(\sS,X,e_i) - \sH(\sS,X).
\end{equation*}
\end{lemma}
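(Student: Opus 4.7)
The plan is to verify the identity by direct computation: expand $\sum_i \sH(\sS,X,e_i)$ term-by-term against an orthonormal frame $\{e_i\}$, subtract $\sH(\sS,X)$, and match the result against the seven constituents of $\sD(\sS,X)$ in \eqref{1.eqD}. Both sides are pointwise tensorial, so it suffices to work at one point in a frame orthonormal there.

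Most of the summands in $\sH(\sS,X,Y)$ trace off routinely. Writing $\sS$ in local coordinates as $S_{ij}$, one obtains $\sum_i 2\abs{\sS(e_i,\cdot)}^2 = 2\abs{S_{ij}}^2$, $\sum_i \D_{e_i}\D_{e_i}S = \Lap S$, and $\sum_i \tfrac{1}{t}\sS(e_i,e_i) = \tfrac{1}{t}S$, the last of which cancels the corresponding term in $\sH(\sS,X)$. Tracing $-4(\D_X\sS)(e_i,e_i)$ gives $-4\D_XS$ (since $\D g=0$); combined with the $-2\scal{\D S,X}$ in $\sH(\sS,X)$ this reproduces the $-2(\D_jS)X_j$ in $\sD$. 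Tracing $4(\D_{e_i}\sS)(X,e_i)$ yields $4(\D_iS_{ij})X_j$ after using the symmetry of $\sS$. The curvature trace $\sum_i -2\scal{\Rm(X,e_i)X,e_i}$ contributes $+2R_{ij}X_iX_j$: the minus sign compensates the fact that $\sum_i\scal{\Rm(X,e_i)X,e_i} = -\Ric(X,X)$ in the convention used here (easily checked on the unit sphere). Finally, the $2\sS(X,X)=2S_{ij}X_iX_j$ inside $\sH(\sS,X)$ subtracts off to give the $-2S_{ij}X_iX_j$ appearing in $\sD$.

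The only genuinely subtle step, and the one I would double-check most carefully, is tracing the time-derivative piece $2(\dt\sS)(Y,Y)$. The naive identification $\sum_i 2(\dt\sS)(e_i,e_i) = 2\dt S$ is incorrect, because the metric itself is evolving: differentiating $S=g^{ij}S_{ij}$ and using $\dt g^{ij}=2S^{ij}$ gives
\begin{equation*}
g^{ij}\dt S_{ij} = \dt S - (\dt g^{ij})S_{ij} = \dt S - 2\abs{S_{ij}}^2,
\end{equation*}
so the actual traced contribution is $2\dt S - 4\abs{S_{ij}}^2$. Combining with the $+2\abs{S_{ij}}^2$ from tracing $\abs{\sS(Y,\cdot)}^2$ and subtracting the $\dt S$ coming from $\sH(\sS,X)$ produces exactly $\dt S - 2\abs{S_{ij}}^2$, which are the first two terms of $\sD(\sS,X)$.

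Assembling all of the pieces reproduces each term of \eqref{1.eqD}, completing the proof. The computation is purely mechanical; the two points demanding attention are the $g$-dependence in the $\dt\sS$ trace just described, and getting the sign right in the Riemannian curvature trace.
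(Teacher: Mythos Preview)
Your proof is correct and follows essentially the same approach as the paper: a direct term-by-term trace of $\sH(\sS,X,e_i)$, with the key observation that $\sum_i(\dt\sS)(e_i,e_i)=\dt S-2\abs{S_{ij}}^2$ because $\dt g^{ij}=2S^{ij}$. The paper organizes the bookkeeping slightly differently (collecting the traced sum first and then rewriting it as $\sD(\sS,X)+\sH(\sS,X)$), but the content is identical.
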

In Section \ref{Sect4}, after introducing a notation which makes it possible
to deal with the forwards and the backwards case at the same time,
we study geodesics for the $\sL$-length functionals and some
regularity properties for the corresponding distances. In the last
section finally, we prove Theorem \ref{1.mainthm}, following the
proof for the Ricci flow case by Perelman from
\cite{Perelman:entropy}, Section 7. See also Kleiner and Lott
\cite{KleinerLott} or M\"{u}ller \cite{Muller:Harnack} for more details.
\paragraph{Acknowledgements:}
I thank Gerhard Huisken and Klaus Ecker for their invitation for a
four month research visit in Potsdam and Berlin. During this time I
developed the $\sL_b$-functional and the monotonicity of the
backwards reduced volume for the case of List's extended Ricci flow
system \cite{List:diss} discussed in Section \ref{Sect2} in joint work with
Valentina Vulcanov. This was part of Valentina Vulcanov's master
thesis \cite{Vulcanov:master}. The present work is a natural
generalization of this result. Last but not least, I also thank
Michael Struwe for valuable suggestions and the Swiss National
Science Foundation for financial support.
%
%
\section{Some examples}\label{Sect2}
\paragraph{i) The static case.}
Let $(M,g)$ be a Riemannian manifold and set $S_{ij}=0$ so that $g$
is fixed. Then the quantity $\sD$ reduces to $\sD(0,X)=2R_{ij}X_iX_j
= 2\Ric(X,X)$. In the case where $M$ has nonnegative Ricci
curvature, i.e.~$\sD(0,X)\geq 0$ for all vector fields $X$ on $M$,
Theorem \ref{1.mainthm} can be applied. For example the
\emph{backwards} reduced volume
\begin{equation*}
V_b(\tau) = \int_M (4\pi \tau)^{-n/2}e^{-\ell_b(q,\tau)} dV
\end{equation*}
is non-increasing in $\tau$, where
\begin{equation*}
\ell_b(q,\tau_1):= \inf_{\gamma \in
\Gamma}\left\{\frac{1}{2\sqrt{\tau_1}}\int_0^{\tau_1}\sqrt{\tau}
\abs{\dtau \gamma}^2 d\tau\right\}.
\end{equation*}
Note that the assumption $\Ric \geq 0$ is necessary for the
monotonicity, a result which we already proved in
\cite{Muller:Harnack}, page 72.%
\paragraph{ii) The Ricci flow.}
Let $(M,g(t))$ be a solution to the Ricci flow, i.e.~let
$S_{ij}=R_{ij}$ be the Ricci and $S=R$ the scalar curvature tensor
on $M$. Since the scalar curvature evolves by $\dt R = \Lap R +
2\abs{R_{ij}}^2$, and because of the twice traced second Bianchi
identity $\D_i R_{ij}=\tfrac{1}{2}\D_j R$, we see from (\ref{1.eqD})
that the quantity $\sD(\Ric,X)$ vanishes identically on $M$. Hence
the theorem can be applied. Note that $\sH(\Ric,X,Y)$ and
$\sH(\Ric,X)$ denote Hamilton's matrix and trace Harnack quantities
for the Ricci flow from \cite{Hamilton:RicciHarnack}. The backwards
reduced volume corresponds to the one defined by Perelman in
\cite{Perelman:entropy}, the forwards reduced volume and the proof
of its monotonicity were developed by Feldman, Ilmanen
and Ni in \cite{FeldmanIlmanenNi}.%
\paragraph{iii) Bernhard List's flow.}
In his dissertation \cite{List:diss}, Bernhard List introduced a
system closely related to the Ricci flow, namely
\begin{equation}\label{2.List1}
\left\{\begin{split}\dt g &= -2\Ric{} + 4 \; \D\psi \otimes \D\psi,\\%
\dt \psi &= \Lap_g \psi, \end{split}\right.
\end{equation}
where $\psi:M\to\RR$ is a smooth function. His motivation came from
general relativity theory: for static vacuum solutions, the Einstein
evolution problem -- which is in general a hyperbolic system of
partial differential equations describing a Lorentzian 4-manifold --
reduces to a weakly elliptic system on a 3-dimensional Riemannian
manifold $M$, the space slice in the so-called $3+1$ split of
space-time (cf. \cite{List:diss}, \cite{MisnerThorneWheeler}). The
remaining freedom for solutions consists of the Riemannian metric
$g$ on $M$ and the lapse function, which measures the speed of the
space slice in time direction. If we let $\psi$ be the logarithm of
the lapse function, the static Einstein vacuum equations read
\begin{equation}\label{2.List2}
\left\{\begin{split}\Ric(g) &= 2 \; \D\psi \otimes \D\psi,\\%
\Lap_g \psi &= 0. \end{split}\right.
\end{equation}
Clearly the solutions of (\ref{2.List2}) are exactly the stationary
solutions of (\ref{2.List1}).\\\\%
If we set $S_{ij}=R_{ij}-2\D_i\psi\D_j\psi$ with
$S=R-2\abs{\D\psi}^2$, the first of the flow equations in
(\ref{2.List1}) is again of the form $\dt g_{ij}=-2S_{ij}$. List
proved (\cite{List:diss}, Lemma 2.11) that under this flow
\begin{equation*}
\dt S = \Lap S +2\abs{S_{ij}}^2 + 4\abs{\Lap \psi}^2.
\end{equation*}
Moreover, a direct computation shows that
\begin{equation*}
4(\D_i S_{ij})-2(\D_j S) = -8\D_i(\D_i\psi\D_j\psi)
+4\D_j(\D_i\psi\D_i\psi)=-8\Lap\psi\D_j\psi,
\end{equation*}
and plugging this into (\ref{1.eqD}) yields
\begin{equation}
\sD(S_{ij},X)=4\abs{\Lap\psi}^2-8\Lap\psi\D_j\psi X_j +
4\D_i\psi\D_j\psi X_iX_j = 4\abs{\Lap\psi-\D_X\psi}^2 \geq 0
\end{equation}
for all vector fields $X$ on $M$. Hence, we can apply the main
theorem, i.e.~the backwards and forwards reduced volume monotonicity
results hold for List's flow.%
\paragraph{iv) The Ricci flow coupled with harmonic map heat flow.}
This flow is introduced in \cite{Muller:RicciHarmonic}. Let $M$ be
closed and fix a Riemannian manifold $(N,\gamma)$. The couple
$(g(t),\phi(t))_{t \in [0,T)}$ consisting of a family of smooth
metrics $g(t)$ on $M$ and a family of smooth maps $\phi(t)$ from $M$
to $N$ is called a solution to the Ricci flow coupled with harmonic
map heat flow with coupling function $\alpha(t) \geq 0$, if it
satisfies
\begin{equation}\label{2.RH1}
\left\{\begin{split}\dt g &= -2\Ric{} + 2\alpha \; \D\phi \otimes \D\phi,\\%
\dt \phi &= \tau_g \phi, \end{split}\right.
\end{equation}
Here, $\tau_g \phi$ denotes the tension field of the map $\phi$ with
respect to the evolving metric $g$. Note that Bernhard List's flow
above corresponds to the special case where $\alpha=2$ and $N=\RR$
(and thus $\tau_g = \Lap_g$ is the Laplace-Beltrami operator). We
now show that the monotonicity of the reduced volumes holds for this
more general flow. To this end, we set $S_{ij}=R_{ij}- \alpha
\D_i\phi\D_j\phi$ with trace $S=R-2\alpha \, e(\phi)$, where
$e(\phi) = \frac{1}{2}\abs{\D \phi}^2$ denotes the standard local
energy density of the map $\phi$. In \cite{Muller:RicciHarmonic}, we
prove the evolution equation
\begin{equation*}
\dt S = \Lap S + 2\abs{S_{ij}} + 2\alpha \abs{\tau_g \phi}^2
-2\dot{\alpha}e(\phi).
\end{equation*}
Using $4(\D_i S_{ij})X_j - 2(\D_j S)X_j = -4\alpha\, \tau_g \phi
\D_j\phi X_j$ and plugging into (\ref{1.eqD}), we get
\begin{equation}
\sD(S_{ij},X)=2\alpha \abs{\tau_g\phi-\D_X\phi}^2
-2\dot{\alpha}e(\phi)
\end{equation}
for all $X$ on $M$. Thus, we can again apply Theorem \ref{1.mainthm}
if $\alpha(t)\geq 0$ is non-increasing.
\paragraph{v) The mean curvature flow.}
Let $M^n(t) \subset \RR^{n+1}$ denote a family of hypersurfaces
evolving by mean curvature flow. Then the induced metrics evolve by
$\dt g_{ij} = -2HA_{ij}$, where $A_{ij}$ denote the components of
the second fundamental form $A$ on $M$ and $H=g^{ij}A_{ij}$ denotes
the mean curvature of $M$. Letting $S_{ij}=HA_{ij}$ with trace
$S=H^2$, the expression $\sH(\sS,X)$ from Definition \ref{1.defH}
becomes
\begin{align*}
\sH(\sS,X)&=\dt H^2 + \tfrac{1}{t}H^2 -
2\big\langle\D H^2,X\big\rangle + 2HA(X,X)\\%
&= 2H\big(\dt H + \tfrac{1}{2t}H - 2\scal{\D H,X} + A(X,X)\big),
\end{align*}
that is $2H$ times Hamilton's differential Harnack expression for
the mean curvature flow defined in \cite{Hamilton:McfHarnack}.
Moreover, the quantity $\sD(\sS,X)$ again has a sign for all vector
fields $X$, but unfortunately the wrong one for our purpose. Indeed,
one finds $\sD(\sS,X)= -2\abs{\D H-A(X,\cdot)}^2 \leq 0$, $\forall
X\in\Gamma(TM)$, and Theorem \ref{1.mainthm} can't be applied. But
fortunately the sign changes if we consider mean curvature flow in
Minkowski space, as suggested by Mu-Tao Wang at a conference in
Oberwolfach. More general, let $M^n(t) \subset L^{n+1}$ be a family
of spacelike hypersurfaces in an ambient Lorentzian manifold,
evolving by Lorentzian mean curvature flow. Then the induced metric
solves $\dt g_{ij}=2HA_{ij}$, i.e.~we have $S_{ij}=-HA_{ij}$ and
$S=-H^2$. Marking the curvature with respect to the ambient manifold
with a bar, we have the Gauss equation
\begin{equation*}
R_{ij}=\bar{R}_{ij}-HA_{ij}+A_{i\ell}A_{\ell j} + \bar{R}_{i0j0},
\end{equation*}
the Codazzi equation
\begin{equation*}
\D_iA_{jk}-\D_kA_{ij}=\bar{R}_{0jki},
\end{equation*}
as well as the evolution equation for the mean curvature
\begin{equation*}
\dt H = \Lap H-H(\abs{A}^2 +\overline{\Ric}(\nu,\nu)),
\end{equation*}
cf. Section 2.1 and 4.1 of Holder \cite{Holder:diss}. Here, $\nu$
denotes the future-oriented timelike normal vector, represented by
$0$ in the index-notation. Combining the three equations above, we
find
\begin{equation}
\sD(\sS,X)=2\abs{\D H-A(X,\cdot)}^2 +
2\overline{\Ric}(H\nu-X,H\nu-X) +
2\scal{\overline{\Rm}(X,\nu)\nu,X}.
\end{equation}
In particular, if $L^{n+1}$ has nonnegative sectional curvatures, we
get $\sD(\sS,X)\geq 0$ and our main theorem can be applied.
%
%
\section{Proof of Lemma \ref{1.lemmaD}}\label{Sect3}
This is just a short computation. First, note that since the metric
evolves by $\dt g_{ij} = -2S_{ij}$ its inverse evolves by $\dt
g^{ij} = 2S^{ij} := 2g^{ik}g^{j\ell}S_{k\ell}$. As a consequence
\begin{equation}
\dt S = \dt (g^{ij}S_{ij}) = 2 \abs{S_{ij}}^2 + \sum_i (\dt
\sS)(e_i,e_i),
\end{equation}
where $\{e_i\}$ is an orthonormal basis. Therefore, by tracing and
rearranging the terms, we find
\begin{align*}
\sum_i \sH(\sS,X,e_i) &= \sum_i\big(2(\dt \sS)(e_i,e_i) + 2
\abs{\sS(e_i,\cdot)}^2 - \D_{e_i}\D_{e_i}S +
\tfrac{1}{t}\sS(e_i,e_i)\big)\\%
&\quad +\sum_i \big(- 4(\D_X\sS)(e_i,e_i) + 4 (\D_{e_i}\sS)(X,e_i) -
2\scal{\Rm(X,e_i)X,e_i}\big)\\%
&= 2\big(\dt S - 2\abs{S_{ij}}^2\big) + 2\abs{S_{ij}}^2 -\Lap S +
\tfrac{1}{t}S\\%
&\quad -4(\D_j S)X_j + 4(\D_iS_{ij})X_j + 2\Ric(X,X)\\%
&= \dt S -2\abs{S_{ij}}^2-\Lap S - 2(\D_jS)X_j + 4(\D_iS_{ij})X_j +
2R_{ij}X_iX_j\\%
&\quad - 2S_{ij}X_iX_j + \dt S + \tfrac{1}{t}S -2(\D_jS)X_j +
2S_{ij}X_iX_j\\%
&= \sD(\sS,X) + \sH(\sS,X).
\end{align*}
This proves the lemma.\qed
%
%
\section{$\sL_f$-geodesics and $\sL_b$-geodesics}\label{Sect4}%
Obviously, letting $\tau$ play the role of the forwards time, the
backwards reduced distance as defined in Definition \ref{1.defB}
corresponds to the forwards reduced distance for the flow $\dtau
g_{ij} = + 2S_{ij}$. Thus the computations in the forwards and the
backwards case differ only by the change of some signs and we find
it convenient to do them only for the forwards case. However, we
mark all the signs that change in the backwards case with a hat. We
illustrate this with an example. Equation (\ref{5.eqNotation}) below
reads
\begin{equation*}
t^{3/2}\tfrac{d}{dt} \big(S+\abs{X}^2 \big)= \hap
t^{3/2}\sH(\sS,\ham X) - \sqrt{t}\big(S + \abs{X}^2\big),
\end{equation*}
with $\sH(\sS,\ham X)$ evaluated at time $t$. For the forwards case,
we simply neglect the hats and interpret $\sH(\sS,-X)$ as in
Definition \ref{1.defH}. For the backwards case, we change all $t$,
$\dt$ into $\tau$, $\dtau$ etc. and change all the signs with a hat,
i.e.~the statement is
\begin{equation*}
\tau^{3/2}\tfrac{d}{d\tau} \big(S+\abs{X}^2 \big)= -
\tau^{3/2}\sH(\sS,X) - \sqrt{\tau}\big(S + \abs{X}^2\big),
\end{equation*}
where $\sH(\sS,X)$ is now evaluated at $\tau=-t$, i.e.
\begin{equation}\label{4.defH1}
\sH(\sS,X)= -\dtau S - \tfrac{1}{\tau}S -2\scal{\D S,X}+2\sS(X,X).
\end{equation}
Similarly, the matrix Harnack type expression $\sH(\sS,X,Y)$ from
Definition \ref{1.defH} has to be interpreted as
\begin{equation}\label{4.defH2}
\begin{split}
\sH(\sS,X,Y)&= -2(\dtau \sS)(Y,Y)+2\abs{\sS(Y,\cdot)}^2-\D_Y\D_YS
-\tfrac{1}{\tau}\sS(Y,Y)\\%
&\quad -4(\D_X\sS)(Y,Y)+4(\D_Y\sS)(X,Y)-2\scal{\Rm(X,Y)X,Y}\\%
\end{split}
\end{equation}
in the backwards case.\\\\%
For the Ricci flow there exist various references where the
following computations can be found in detail for the backwards
case, for example Kleiner and Lott \cite{KleinerLott}, M\"{u}ller
\cite{Muller:Harnack} or Chow et al.~\cite{RF:TAI}. The forwards
case for the Ricci flow can be found in Feldman, Ilmanen and Ni
\cite{FeldmanIlmanenNi}. This and the following section follow
these sources closely.%
\paragraph{The geodesic equation.}%
Let $0 < t_0\leq t_1 \leq T$ and let $\gamma_s(t)$ be a variation of
the path $\gamma(t): [t_0,t_1]\to M$. Using Perelman's notation, we
set $Y(t)=\ds \gamma_s(t)|_{s=0}$ and $X(t)=\dt \gamma_s(t)|_{s=0}$.
The first variation of $\sL_f(\gamma)$ in the direction of $Y(t)$
can then be computed as follows.
\begin{align*}
\delta_Y \sL_f(\gamma) &:= \ds \sL_f(\gamma_s)|_{s=0} =
\int_{t_0}^{t_1}\sqrt{t}\ds\big(S(\gamma_s(t))+
\scal{\dt \gamma_s,\dt \gamma_s}\big)|_{s=0} \;dt\\%
&= \int_{t_0}^{t_1} \sqrt{t}\big(\D_Y S +2\scal{\D_Y X,X}\big)dt =
\int_{t_0}^{t_1} \sqrt{t}\big(\D_Y S +2\scal{\D_X
Y,X}\big)dt\\%
&= \int_{t_0}^{t_1} \sqrt{t}\big(\scal{Y,\D
S}+2\dt\scal{Y,X}-2\scal{Y,\D_X X}\hap 4 \sS(Y,X)\big)dt\\%
&= 2\sqrt{t}\scal{Y,X}\big|_{t_0}^{t_1} + \int_{t_0}^{t_1}
\sqrt{t}\scal{Y,\D S - \tfrac{1}{t}X -2\D_X X \hap 4\sS(X,\cdot)}dt,
\end{align*}
using a partial integration in the last step. An
$\sL_f$-\emph{geodesic} is a critical point of the $\sL_f$-length
with respect to variations with fixed endpoints. Hence, the above
first variation formula implies that the $\sL_f$-geodesic equation
reads
\begin{equation}\label{4.eqGeo1}
G_f(X) := \D_X X - \tfrac{1}{2}\D S + \tfrac{1}{2t}X \ham
2\sS(X,\cdot) = 0.
\end{equation}
Changing the variable $\lambda=\sqrt{t}$ in the definition of
$\sL_f$-length, we get
\begin{equation*}
\sL_f(\gamma(\lambda)) = \int_{\lambda_0}^{\lambda_1}
\big(2\lambda^2 S(\gamma(\lambda)) + \tfrac{1}{2}\abs{\dl
\gamma(\lambda)}^2\big) d\lambda,
\end{equation*}
and the Euler-Lagrange equation (\ref{4.eqGeo1}) becomes
\begin{equation}\label{4.eqGeo2}
G_f(\tilde{X}) := \D_{\tilde{X}}\tilde{X}-2\lambda^2\D S \ham
4\lambda \sS(\tilde{X},\cdot) =0,
\end{equation}
where $\tilde{X}=\dl \gamma(\lambda)=2\lambda X$.%
\paragraph{Existence of $\sL_f$-geodesics.}%
From standard existence theory for ordinary differential equations,
we see that for $\lambda_0=\sqrt{t_0}$, $p\in M$ and $v\in T_pM$
there is a unique solution $\gamma(\lambda)$ to (\ref{4.eqGeo2}) on
an interval $[\lambda_0,\lambda_0+\eps]$ with $\gamma(\lambda_0)=p$
and $\dl\gamma(\lambda)|_{\lambda=\lambda_0}=\lim_{t\to
t_0}2\sqrt{t}X =v$. If $C$ is a bound for $\abs{\sS}$ and $\abs{\D
S}$ on $M \times [0,T]$ and $\tilde{X}(\lambda)\neq 0$, we find for
$\sL_f$-geodesics
\begin{equation}\label{4.eqGrowth}
\begin{split}
\dl\lvert\tilde{X}\rvert = \tfrac{1}{2\lvert\tilde{X}\rvert}\dl
\lvert\tilde{X}\rvert^2 &= \hap 2\lambda \lvert\tilde{X}\rvert \;
\sS\Big(\tfrac{\tilde{X}}{\lvert\tilde{X}\rvert},
\tfrac{\tilde{X}}{\lvert\tilde{X}\rvert}\Big) + 2\lambda^2 \scal{
\D S, \tfrac{\tilde{X}}{\lvert\tilde{X}\rvert}}\\%
&\leq 2\lambda C\lvert\tilde{X}\rvert + 2\lambda^2 C .
\end{split}
\end{equation}
Hence, by a continuity argument, the unique $\sL_f$-geodesic
$\gamma(\lambda)$ can be extended to the whole interval
$[\lambda_0,\sqrt{T}]$, i.e.~for any $p\in M$ and $t_1 \in[t_0,T]$
we get a globally defined smooth $\sL_f$-exponential map, taking
$v\in T_pM$ to $\gamma(t_1)$, where $\lim_{t\to t_0}2\sqrt{t}\dt
\gamma(t) =v$. Moreover, $\tilde{X}=2\sqrt{t}X(t)$ has a limit as
$t\to 0$ for $\sL_f$-geodesics and the definition of $\sL_f(\gamma)$
can be extended to $t_0=0$.\\\\%
For all $(q,t_1)$ there exists a minimizing $\sL_f$-geodesic from
$p=\gamma(0)$ to $q=\gamma(t_1)$. To see this, we can either show
that $\sL_f$-geodesics minimize for a short time and then use the
broken geodesic argument as in the standard Riemannian case, or
alternatively we can use the direct method of calculus of
variations. There exists a minimizer of $\sL_f(\gamma)$ among all
Sobolev curves, which then has to be a solution of (\ref{4.eqGeo1})
and hence a smooth $\sL_f$-geodesic.\\\\%
In the following, we fix $p\in M$ and $t_0=0$ and denote by
$L_f(q,t_1)$ the $\sL_f$-length of a shortest $\sL_f$-geodesic
$\gamma(t)$ joining $p=\gamma(0)$ with $q=\gamma(t_1)$, i.e.~the
reduced length is
\begin{equation*}
\ell_f(q,t_1)=\tfrac{1}{2\sqrt{t_1}}L_f(q,t_1).
\end{equation*}
\paragraph{Technical issues about $L_f(q,t_1)$.}%
We first prove lower and upper bounds for $L_f(q,t_1)$. Since $M$ is
closed, there is a positive constant $C_0$ such that $-C_0g(t) \leq
\sS(t) \leq C_0g(t)$ (and thus $-C_0n \leq S(t) \leq C_0n$) for all
$t \in [0,T]$. We can then obtain the following estimates.
\begin{lemma}\label{4.lemmaBounds}
Denote by $d(p,q)$ the standard distance between $p$ and $q$ at time
$t=0$, i.e. the Riemannian distance with respect to $g(0)$. Then the
reduced distance $L_f(q,t_1)$ satisfies
\begin{equation}\label{4.eqBounds}
\frac{d^2(p,q)}{2\sqrt{t_1}}e^{-2C_0t_1} -\frac{2nC_0}{3}t_1^{3/2}
\leq L_f(q,t_1) \leq \frac{d^2(p,q)}{2\sqrt{t_1}}e^{2C_0t_1}
+\frac{2nC_0}{3}t_1^{3/2}.
\end{equation}
\end{lemma}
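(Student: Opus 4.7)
The plan is to prove the two inequalities by using a well-chosen test curve for the upper bound and a Cauchy--Schwarz argument for the lower bound. Both estimates rest on the elementary fact that the pointwise tensor bound $-C_0 g(t) \leq \sS(t) \leq C_0 g(t)$, combined with the flow $\dt g_{ij}=-2S_{ij}$, gives the metric comparison
\begin{equation*}
e^{-2C_0 t}\, g(0) \leq g(t) \leq e^{2C_0 t}\, g(0), \qquad t \in [0,T],
\end{equation*}
since $\tfrac{d}{dt}\log |v|_{g(t)}^2 = -2\sS(v,v)/|v|_{g(t)}^2 \in [-2C_0, 2C_0]$ for any nonzero tangent vector $v$. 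It is also useful to perform the change of variable $\lambda = \sqrt{t}$ already employed in Section~\ref{Sect4}. Writing $\beta(\lambda) := \gamma(\lambda^2)$ one checks that
\begin{equation*}
\sL_f(\gamma) = \int_0^{\sqrt{t_1}} \Bigl(2\lambda^2\, S(\beta(\lambda)) + \tfrac{1}{2}|\dl \beta|_{g(\lambda^2)}^2\Bigr)\,d\lambda,
\end{equation*}
which splits $\sL_f$ cleanly into a curvature part (to be controlled by $|S|\leq nC_0$) and a kinetic part.

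For the \emph{upper bound}, I would take $\beta(\lambda)$ to be the $g(0)$-minimizing geodesic from $p$ to $q$ parametrized at constant speed on $[0,\sqrt{t_1}]$, so that $|\dl\beta|_{g(0)}^2 \equiv d^2(p,q)/t_1$. By the metric comparison, $|\dl\beta|_{g(\lambda^2)}^2 \leq e^{2C_0 t_1}\,d^2(p,q)/t_1$ on the whole interval, hence
\begin{equation*}
\sL_f(\gamma) \leq \int_0^{\sqrt{t_1}} 2\lambda^2\, nC_0\, d\lambda + \int_0^{\sqrt{t_1}} \frac{e^{2C_0 t_1} d^2(p,q)}{2 t_1}\, d\lambda = \frac{2 n C_0}{3} t_1^{3/2} + \frac{d^2(p,q)}{2\sqrt{t_1}}\,e^{2C_0 t_1}.
\end{equation*}
Since $L_f(q,t_1)$ is the infimum over all such curves, this is the desired upper estimate.

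For the \emph{lower bound}, let $\gamma$ be a minimizer (which exists by the discussion immediately preceding the lemma) and let $\beta$ be its reparametrization in $\lambda$. Using $S \geq -nC_0$ and the lower half of the metric comparison,
\begin{equation*}
\sL_f(\gamma) \geq -\frac{2 n C_0}{3}t_1^{3/2} + \tfrac{1}{2}\,e^{-2C_0 t_1}\int_0^{\sqrt{t_1}} |\dl\beta|_{g(0)}^2\, d\lambda.
\end{equation*}
Cauchy--Schwarz applied to $\int |\dl\beta|_{g(0)}\cdot 1 \, d\lambda \geq d(p,q)$ (since $\beta$ connects $p$ to $q$ at time $0$) gives $\int_0^{\sqrt{t_1}} |\dl\beta|_{g(0)}^2\,d\lambda \geq d^2(p,q)/\sqrt{t_1}$, which yields the lower bound in~(\ref{4.eqBounds}).

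The argument involves no hard analytic step; the only point requiring care is the metric comparison derived from the uniform bound on $\sS$, and making sure both the choice of test curve and the Cauchy--Schwarz step are carried out with the reparametrized variable $\lambda=\sqrt{t}$ so that the constant $1/2$ in front of $d^2(p,q)/\sqrt{t_1}$ comes out correctly rather than an incorrect $2/3$.
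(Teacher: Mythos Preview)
Your proof is correct and follows essentially the same route as the paper: the metric comparison $e^{-2C_0 t}g(0)\leq g(t)\leq e^{2C_0 t}g(0)$, the reparametrization $\lambda=\sqrt{t}$, a constant-speed $g(0)$-geodesic as test curve for the upper bound, and the energy--length (Cauchy--Schwarz) inequality for the lower bound. The only cosmetic difference is that the paper derives the lower bound for an arbitrary competitor $\gamma$ and then passes to the infimum, so it does not need to invoke the existence of a minimizer at that stage.
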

\begin{proof}
The bounds for $\sS(t)$ imply $-2C_0g(t) \leq \ham 2\sS(t) = \dt
g(t) \leq 2C_0g(t)$ and thus
\begin{equation}\label{4.eq7}
e^{-2C_0t}g(0) \leq g(t) \leq e^{2C_0t}g(0).
\end{equation}
Using $\lambda = \sqrt{t}$ as above, we can estimate
\begin{align*}
\sL_f(\gamma) &= \int_0^{\sqrt{t_1}} \Big(\tfrac{1}{2}\abs{\dl
\gamma(\lambda)}^2 + 2\lambda^2 S(\gamma(\lambda))\Big)
d\lambda\\%
&\geq \tfrac{1}{2}e^{-2C_0t_1}\int_0^{\sqrt{t_1}} \abs{\dl
\gamma(\lambda)}^2_{g(0)} d\lambda -\tfrac{2}{3}nC_0\lambda^3
\big|_0^{\sqrt{t_1}}\\%
&\geq \frac{d^2(p,q)}{2\sqrt{t_1}}e^{-2C_0t_1} - \frac{2nC_0}{3}\,
t_1^{3/2}.
\end{align*}
With $L_f(q,t_1)= \inf_{\gamma \in \Gamma} \sL_f(\gamma)$ we get the
lower bound in (\ref{4.eqBounds}). For the upper bound, let
$\eta(\lambda):[0,\sqrt{t_1}]\to M$ be a minimal geodesic from $p$
to $q$ with respect to $g(0)$. Then
\begin{align*}
L_f(q,t_1) &\leq \sL_f(\eta) = \int_0^{\sqrt{t_1}}
\Big(\tfrac{1}{2}\abs{\dl \eta(\lambda)}^2 + 2\lambda^2
S(\eta(\lambda))\Big)
d\lambda\\%
&\leq \tfrac{1}{2}e^{2C_0t_1}\int_0^{\sqrt{t_1}} \abs{\dl
\eta(\lambda)}^2_{g(0)} d\lambda +\tfrac{2}{3}nC_0\lambda^3
\big|_0^{\sqrt{t_1}}\\%
&= \frac{d^2(p,q)}{2\sqrt{t_1}}e^{2C_0t_1} + \frac{2nC_0}{3}\,
t_1^{3/2},
\end{align*}
which proves the claim.
\end{proof}
\begin{lemma}\label{4.lemmaLip}%
The distance $L_f:M\times(0,T)\to \RR$ is locally Lipschitz
continuous with respect to the metric $g(t)+dt^2$ on space-time and
smooth outside of a set of measure zero.
\end{lemma}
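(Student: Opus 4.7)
The plan is two-step: first establish local Lipschitz continuity by a direct comparison-curve argument using $g(0)$-geodesics as connectors, then deduce smoothness outside a measure-zero cut locus via the smooth $\sL_f$-exponential map together with Sard's and Rademacher's theorems.

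For the Lipschitz estimate, fix $(\bar q,\bar t_1) \in M\times(0,T)$ and a small compact neighborhood $U$ of it. Given $(q,t_1),(q',t_1') \in U$ with, say, $t_1 \leq t_1'$, let $\gamma\colon [0,t_1] \to M$ be a minimizing $\sL_f$-geodesic from $p$ to $q$. I would construct a competitor for $L_f(q',t_1')$ by concatenating $\gamma$ with a short connector $\sigma\colon [t_1,t_1'] \to M$ from $q$ to $q'$, taking $\sigma$ to be a $g(0)$-minimizing geodesic traversed at constant speed $d(q,q')/(t_1'-t_1)$ (and if $t_1 = t_1'$, one first shortens $\gamma$ to a slightly smaller time to make room for a space-like connector). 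Using the metric comparison $e^{-2C_0 t}g(0)\leq g(t)\leq e^{2C_0 t}g(0)$ from the proof of Lemma \ref{4.lemmaBounds} together with $|S|\leq nC_0$, the $\sL_f$-length of $\sigma$ is bounded by $C\big(|t_1'-t_1| + d(q,q')\big)$ with $C$ depending only on $U$. Swapping the roles of the two points yields the two-sided estimate, hence local Lipschitz continuity with respect to $g(t)+dt^2$.

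For smoothness almost everywhere, the $\sL_f$-exponential map $\sL\exp\colon T_pM\times(0,T]\to M$, $(v,t_1)\mapsto\gamma_v(t_1)$, is smooth by smooth dependence of ODE solutions on initial data, where $\gamma_v$ is the unique $\sL_f$-geodesic with $\lim_{t\to 0}2\sqrt{t}\,\dt\gamma=v$. Call $(q,t_1)\in M\times(0,T)$ \emph{regular} if there is a unique minimizing $\sL_f$-geodesic from $p$ to $q$ and its initial velocity $v$ is a regular value of $\sL\exp(\cdot,t_1)$. The regular set is open, and on it the inverse function theorem produces a smooth local inverse $q\mapsto v(q,t_1)$, so that $L_f(q,t_1)=\sL_f(\gamma_{v(q,t_1)})$ depends smoothly on $(q,t_1)$.

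The main obstacle is showing that the complementary cut locus has measure zero. The set of critical values of $\sL\exp(\cdot,t_1)$ is null by Sard's theorem, and a Fubini argument in $t_1$ lifts this to $M\times(0,T)$. Non-uniqueness of minimizers is controlled via Rademacher's theorem: since $L_f$ is locally Lipschitz by the previous step, it is differentiable almost everywhere, and at any differentiability point $(q,t_1)$ the first variation formula derived in Section \ref{Sect4} identifies $\D L_f(q,t_1)=2\sqrt{t_1}\,X(t_1)$ for every minimizing $\sL_f$-geodesic ending at $q$, so $X(t_1)$ and hence $\gamma$ are uniquely determined by the geodesic equation (\ref{4.eqGeo1}). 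Combining the two null contributions proves that $L_f$ is smooth off a measure-zero subset of space-time.
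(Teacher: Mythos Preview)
Your smoothness argument (Sard for conjugate points, Rademacher plus the first-variation identity $\nabla L_f=2\sqrt{t_1}X(t_1)$ to rule out multiple minimizers at differentiability points) is correct and coincides with the paper's.

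The Lipschitz part, however, has a genuine gap. You run the connector $\sigma$ over $[t_1,t_1']$ at $g(0)$-speed $d(q,q')/(t_1'-t_1)$ and claim $\sL_f(\sigma)\leq C\big(|t_1'-t_1|+d(q,q')\big)$. But the kinetic term gives
\[
\int_{t_1}^{t_1'}\sqrt{t}\,|\dot\sigma|_{g(t)}^2\,dt \;\leq\; C\,\frac{d(q,q')^2}{t_1'-t_1},
\]
which blows up when $0<t_1'-t_1\ll d(q,q')$; no Lipschitz bound follows. Your parenthetical fix for $t_1=t_1'$ does not cover this regime, and would anyway require a uniform bound on $|\dot\gamma|$ that you do not invoke. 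A second problem is ``swapping the roles'': your concatenation only manufactures competitors ending at the \emph{later} time, so it yields $L_f(q',t_1')\leq L_f(q,t_1)+C(\cdots)$ but not the reverse inequality; you cannot simply run the construction backward in $t$.

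The paper avoids both issues by separating time and space. For the time variable at a fixed point it uses a constant extension for one direction and, crucially, a \emph{reparametrization/compression} of a minimizing geodesic to the later time for the other direction (this step uses the speed bound on minimizing $\sL_f$-geodesics coming from Lemma~\ref{4.lemmaBounds} and (\ref{4.eqGrowth})). For the space variable at fixed time $t_2$ it appends a \emph{unit-speed} $g(t_2)$-geodesic from $q_2$ to $q_1$ over $[t_2,\,t_2+d_{g(t_2)}(q_1,q_2)]$, so that $|\dot\sigma|^2$ stays bounded, obtains a bound for $L_f(q_1,t_2+d_{g(t_2)}(q_1,q_2))$, and then applies the time-compression step to return to time $t_2$. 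This unit-speed-plus-compression trick is precisely what your argument is missing.
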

\begin{proof}
For any $0 < t_* < T$, $q_* \in M$ and small $\eps > 0$, let $t_1 <
t_2$ be in $(t_*-\eps,t_*+\eps)$ and $q_1,q_2 \in
B_{g(t_*)}(q_*,\eps) = \{q\in M \mid d_{g(t_*)}(q_*,q) < \eps\}$,
where $d_{g(t_*)}(\cdot,\cdot)$ denotes the Riemannian distance with
respect to the metric $g(t_*)$. Since
\begin{equation*}
\abs{L_f(q_1,t_1)-L_f(q_2,t_2)} \leq
\abs{L_f(q_1,t_1)-L_f(q_1,t_2)}+\abs{L_f(q_1,t_2)-L_f(q_2,t_2)},
\end{equation*}
it suffices for the Lipschitz continuity with respect to $g(t)+dt^2$
to show that $L_f(q_1,\cdot)$ is locally Lipschitz in the time
variable uniformly in $q_1 \in B_{g(t_*)}(q_*,\eps)$ and
$L_f(\cdot,t)$ is locally Lipschitz in the space variable uniformly
in $t \in (t_*-\eps,t_*+\eps)$. Our proof is related to the proofs
of Lemma 7.28 and Lemma 7.30 in \cite{RF:TAI}. In the following,
$C=C(C_0,n,t_*,\eps)$ denotes a generic constant which might change
from line to line.
\paragraph{Claim 1:} $L_f(q_1,t_2) \leq L_f(q_1,t_1) + C(t_2-t_1)$.
\begin{proof}
Let $\gamma:[0,t_1]\to M$ be a minimal $\sL_f$-geodesic from $p$ to
$q_1$ and define $\eta:[0,t_2]\to M$ by
\begin{equation}\label{5.eqEta1}
\eta(t) := \begin{cases}\gamma(t) &\textrm{ if }t\in [0,t_1],\\%
q_1 &\textrm{ if }t\in [t_1,t_2].\end{cases}
\end{equation}
We compute
\begin{align*}
L_f(q_1,t_2)&\leq \sL_f(\eta)= \sL_f(\gamma) +
\int_{t_1}^{t_2}\sqrt{t}S(q_1,t) dt\\%
&\leq L_f(q_1,t_1) + \tfrac{2}{3}nC_0\Big(t_2^{3/2}-t_1^{3/2}\Big)\\%
&\leq L_f(q_1,t_1) + C(t_2-t_1),
\end{align*}
which proves Claim 1.
\end{proof}
\paragraph{Claim 2:} $L_f(q_1,t_1) \leq L_f(q_1,t_2) + C(t_2-t_1)$.
\begin{proof}
Let $\gamma:[0,t_2]\to M$ be a minimal $\sL_f$-geodesic from $p$ to
$q_1$ and define $\eta:[0,t_1]\to M$ by
\begin{equation}\label{5.eqEta2}
\eta(t) := \begin{cases}\gamma(t) &\textrm{ if }t\in [0,2t_1-t_2],\\%
\gamma(\phi(t)) &\textrm{ if }t\in [2t_1-t_2,t_1],\end{cases}
\end{equation}
where $\phi(t) := 2t + t_2-2t_1 \geq t$ on $[2t_1-t_2,t_1]$ with
$\dt \phi(t) \equiv 2$. We compute
\begin{align*}
L_f(q_1,t_1) &\leq \sL_f(\eta) = \sL_f(\gamma) -
\int_{2t_1-t_2}^{t_2}\sqrt{t}\Big(S(\gamma(t),t)
+\abs{\dt\gamma(t)}^2\Big)dt\\%
&\quad + \int_{2t_1-t_2}^{t_1}\sqrt{t}\Big(S(\gamma(\phi(t)),t)
+\abs{\dt\gamma(\phi(t))\cdot \dt\phi(t)}^2\Big)dt\\%
&\leq L_f(q_1,t_2) + \tfrac{2}{3}nC_0\Big(t_2^{3/2}-(2t_1-t_2)^{3/2}
\Big)+\tfrac{2}{3}nC_0\Big(t_1^{3/2}-(2t_1-t_2)^{3/2}\Big)\\%
&\quad +2 \int_{2t_1-t_2}^{t_2}\sqrt{\phi^{-1}(t)}
\abs{\dt\gamma(t)}^2_{g(\phi^{-1}(t))}dt\\%
&\leq L_f(q_1,t_1) + C(t_2-t_1) + 2\int_{2t_1-t_2}^{t_2}
\sqrt{\phi^{-1}(t)}\abs{\dt\gamma(t)}^2_{g(\phi^{-1}(t))}dt.
\end{align*}
Since $\phi^{-1}(t) \leq t$ and $t-\phi^{-1}(t)\leq 2\eps$ on
$[2t_1-t_2,t_2]$, we can estimate the very last term via
(\ref{4.eq7}) by
\begin{equation*}
\int_{2t_1-t_2}^{t_2} \sqrt{\phi^{-1}(t)}
\abs{\dt\gamma(t)}^2_{g(\phi^{-1}(t))}dt \leq e^{4C_0\eps}
\int_{2t_1-t_2}^{t_2}\sqrt{t}\abs{\dt\gamma(t)}^2_{g(t)}dt.
\end{equation*}
As a consequence of the upper bound from Lemma \ref{4.lemmaBounds}
and the growth condition (\ref{4.eqGrowth}), $\abs{\dt
\gamma(t)}^2_{g(t)}$ must be uniformly bounded on $[2t_1-t_2,t_2]$
by a constant $C_1$. Thus
\begin{equation*}
\int_{2t_1-t_2}^{t_2} \sqrt{\phi^{-1}(t)}
\abs{\dt\gamma(t)}^2_{g(\phi^{-1}(t))}dt \leq
e^{4C_0\eps}C_1\big(t_2^{3/2}-(2t_1-t_2)^{3/2}\big) \leq C(t_2-t_1).
\end{equation*}
Together with the computation above, this proves the claim.
\end{proof}
\paragraph{Claim 3:} $L_f(q_1,t_2) \leq L_f(q_2,t_2) + C d_{g(t_2)}(q_1,q_2)$.
\begin{proof}
Let $\gamma:[0,t_2]\to M$ be a minimal $\sL_f$-geodesic from $p$ to
$q_2$ and define the curve $\eta:[0,t_2+d_{g(t_2)}(q_1,q_2)]\to M$
by
\begin{equation}\label{5.eqEta3}
\eta(t) := \begin{cases}\gamma(t) &\textrm{ if }t\in [0,t_2],\\%
\alpha(t) &\textrm{ if }t\in [t_2,t_2+d_{g(t_2)}(q_1,q_2)],\end{cases}%
\end{equation}
where $\alpha:[t_2,t_2+d_{g(t_2)}(q_1,q_2)]\to M$ is a minimal
geodesic of constant unit speed with respect to $g(t_2)$, joining
$q_2$ to $q_1$. Then, using $\abs{\dt\alpha(t)}^2_{g(t)} \leq
e^{4C_0\eps}\abs{\dt\alpha(t)}^2_{g(t_2)} = e^{4C_0\eps}$, we obtain
\begin{align*}
L_f(q_1,t_2+d_{g(t_2)}(q_1,q_2)) &\leq \sL_f(\eta)\\%
&= L_f(q_2,t_2) + \int_{t_2}^{t_2+d_{g(t_2)}(q_1,q_2)}
\sqrt{t}\Big(S(\alpha(t),t)+\abs{\dt\alpha(t)}^2\Big)dt\\%
&\leq L_f(q_2,t_2) + \tfrac{2}{3}\Big(C_0n + e^{4C_0\eps}\Big)
\Big((t_2+d_{g(t_2)}(q_1,q_2))^{3/2}-t_2^{3/2}\Big)\\%
&\leq L_f(q_2,t_2) + Cd_{g(t_2)}(q_1,q_2).
\end{align*}
Finally, using Claim 2 from above, we find
\begin{align*}
L_f(q_1,t_2) &\leq L_f(q_1,t_2+d_{g(t_2)}(q_1,q_2)) +
Cd_{g(t_2)}(q_1,q_2)\\%
&\leq L_f(q_2,t_2) + Cd_{g(t_2)}(q_1,q_2),
\end{align*}
which proves Claim 3.
\end{proof}
The Lipschitz continuity in the time variable follows from Claim 1
and Claim 2. The Lipschitz continuity in the space variable follows
from Claim 3 and the symmetry between $q_1$ and $q_2$.\\\\%
From the definition of $L_f:M\times(0,T)\to\RR$, we see that it is
smooth outside of the set $\bigcup_t (C(t)\times\{t\})$, where for a
fixed time $t_1$ the \emph{cut locus} $C(t_1)$ is defined to be the
set of points $q\in M$ such that either there is more than one
minimal $\sL_f$-geodesic $\gamma:[0,t_1]\to M$ from $p=\gamma(0)$ to
$q=\gamma(t_1)$ or $q$ is conjugate to $p$ along $\gamma$. A point
$q$ is called conjugate to $p$ along $\gamma$ if there exists a
nontrivial $\sL_f$-Jacobi field $J$ along $\gamma$ with
$J(0)=J(t_1)=0$.\\\\%
As in the standard Riemannian geometry, the set $C_1(t_1)$ of
conjugate points to $(p,0)$ is contained in the set of critical
values for the $\sL_f$-exponential map from $(p,0)$ defined above.
Hence it has measure zero by Sard's theorem. If there exist more
than one minimal $\sL_f$-geodesic from $p$ to $q$, then $L(q,t_1)$
is not differentiable at $q$. But since $L_f(q,t_1)$ is Lipschitz,
it has to be differentiable almost everywhere by Rademacher's
theorem and thus the set $C_2(t_1)$ consisting of points for which
there exist more than one minimal $\sL_f$-geodesic also has to have
measure zero. Combining this, $C(t_1)=C_1(t_1)\cup C_2(t_1)$ has
measure zero for all $t_1\in (0,T)$ and so $\bigcup_t
(C(t)\times\{t\})$ is of measure zero, too. This finishes the proof
of the lemma.
\end{proof}
%
%
\section{Proof of Theorem \ref{1.mainthm}}\label{Sect5}
Making use of Lemma \ref{4.lemmaLip}, we first pretend that
$L_f(q,t_1)$ is smooth everywhere and derive formulas for $\abs{\D
L_f}^2$, $\dte L_f$ and $\Lap L_f$ under this assumption.
\begin{lemma}\label{5.lemma1}%
The reduced distance $L_f(q,t_1)$ has the gradient properties
\begin{align}
\abs{\nabla L_f(q,t_1)}^2 &= -4t_1S \hap \tfrac{4}{\sqrt{t_1}}K
+\tfrac{2}{\sqrt{t_1}}L_f(q,t_1),\label{5.eq1}\\%
\dte L_f(q,t_1) &= 2\sqrt{t_1}S \ham \tfrac{1}{t_1}K -
\tfrac{1}{2t_1}L_f(q,t_1),\label{5.eq2}
\end{align}
where
\begin{equation*}
K := \int_0^{t_1} t^{3/2}\sH(\sS, \ham X) dt
\end{equation*}
and $\sH(\sS,\ham X)$ is the Harnack type expression from Definition
\ref{1.defH}, evaluated at time $t$. Remember that in the backwards
case we interpret $\sH(\sS,X)$ as in (\ref{4.defH1}).
\end{lemma}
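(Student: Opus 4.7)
The plan is based on two ideas. First, the first variation formula from Section~\ref{Sect4} identifies $\D L_f(q,t_1) = 2\sqrt{t_1}\,X(t_1)$, where $X(t) = \dt\gamma(t)$ for the unique minimizing $\sL_f$-geodesic $\gamma$ from $p$ to $q$ (at a point $q$ where $L_f(\cdot,t_1)$ is smooth). Indeed, once $\gamma$ satisfies (\ref{4.eqGeo1}) the bulk term in $\delta_Y \sL_f$ vanishes, leaving only the boundary contribution $2\sqrt{t_1}\scal{Y(t_1),X(t_1)}$; the endpoint contribution at $t=0$ drops out because $Y(0)=0$ while $2\sqrt{t}\,X(t)$ stays bounded. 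Hence $\abs{\D L_f}^2 = 4t_1\abs{X(t_1)}^2$. Moreover, the fundamental theorem of calculus along the geodesic gives $\tfrac{d}{dt_1}L_f = \sqrt{t_1}\bigl(S(q,t_1) + \abs{X(t_1)}^2\bigr)$; decomposing this total derivative as $\dte L_f + \scal{\D L_f, X(t_1)} = \dte L_f + 2\sqrt{t_1}\abs{X(t_1)}^2$ yields the preliminary identity $\dte L_f = \sqrt{t_1}\bigl(S - \abs{X(t_1)}^2\bigr)$.

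The remaining task is to express $\abs{X(t_1)}^2$ in terms of $L_f$ and $K$. For this I would integrate the pointwise identity
\begin{equation*}
t^{3/2}\tfrac{d}{dt}\bigl(S + \abs{X}^2\bigr) = \hap t^{3/2}\sH(\sS, \ham X) - \sqrt{t}\bigl(S + \abs{X}^2\bigr)
\end{equation*}
along the geodesic (advertised as an illustration of the hat notation in Section~\ref{Sect4}). Rewriting the left-hand side as $\tfrac{d}{dt}\bigl[t^{3/2}(S + \abs{X}^2)\bigr] - \tfrac{3}{2}\sqrt{t}(S+\abs{X}^2)$ gives
\begin{equation*}
\tfrac{d}{dt}\bigl[t^{3/2}(S + \abs{X}^2)\bigr] = \tfrac{1}{2}\sqrt{t}\bigl(S + \abs{X}^2\bigr) \hap t^{3/2}\sH(\sS, \ham X),
\end{equation*}
and integrating over $[0,t_1]$ produces
\begin{equation*}
t_1^{3/2}\bigl(S(q,t_1) + \abs{X(t_1)}^2\bigr) = \tfrac{1}{2}L_f(q,t_1) \hap K,
\end{equation*}
where the boundary contribution at $t=0$ vanishes because $t^{3/2}\abs{X(t)}^2 = \sqrt{t}\cdot t\abs{X(t)}^2 \to 0$, by the existence of $\lim_{t\to 0}2\sqrt{t}X(t)$ established in Section~\ref{Sect4}. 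Solving for $\abs{X(t_1)}^2$ and substituting into the two preliminary identities above immediately gives (\ref{5.eq1}) and (\ref{5.eq2}).

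The main obstacle is verifying the pointwise ODE along an $\sL_f$-geodesic. Using $\dt g_{ij} = -2S_{ij}$ one computes $\tfrac{d}{dt}\abs{X}^2 = 2\scal{\D_X X,X} - 2\sS(X,X)$, then eliminates $\D_X X$ via the geodesic equation (\ref{4.eqGeo1}) to obtain, in the forward case, $\tfrac{d}{dt}\abs{X}^2 = \scal{\D S,X} - \tfrac{1}{t}\abs{X}^2 + 2\sS(X,X)$. Combining this with $\tfrac{d}{dt}S = \dt S + \scal{\D S,X}$ and recognizing the combination $\dt S + \tfrac{1}{t}S + 2\scal{\D S,X} + 2\sS(X,X) = \sH(\sS,-X)$ from Definition~\ref{1.defH} produces the ODE. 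The backward case is handled identically using (\ref{4.defH1}) and the hat conventions from Section~\ref{Sect4}; the bookkeeping of signs to obtain the unified $\hap/\ham$ formulation is the only delicate point.
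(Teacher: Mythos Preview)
Your proposal is correct and follows essentially the same route as the paper: identify $\D L_f = 2\sqrt{t_1}X(t_1)$ from the first variation, use $\tfrac{d}{dt_1}L_f = \dte L_f + \scal{\D L_f,X}$ to get the preliminary formula for $\dte L_f$, derive the ODE for $S+\abs{X}^2$ along the geodesic via (\ref{4.eqGeo1}), and integrate to obtain $t_1^{3/2}(S+\abs{X}^2)=\tfrac{1}{2}L_f \hap K$. The only cosmetic difference is that you rewrite the ODE as $\tfrac{d}{dt}\bigl[t^{3/2}(S+\abs{X}^2)\bigr]=\tfrac{1}{2}\sqrt{t}(S+\abs{X}^2)\hap t^{3/2}\sH$ before integrating, whereas the paper integrates $t^{3/2}\tfrac{d}{dt}(S+\abs{X}^2)$ by parts; these are the same computation.
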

\begin{proof}
A minimizing curve satisfies $G_f(X)=0$, hence the first variation
formula above yields
\begin{equation*}
\delta_Y L_f(q,t_1) = 2\sqrt{t_1} \scal{X(t_1),Y(t_1)} =
\scal{\nabla L_f(q,t_1),Y(t_1)}.
\end{equation*}
Thus, the gradient of $L_f$ must be $\nabla L_f(q,t_1) = 2
\sqrt{t_1}X(t_1)$. This yields
\begin{equation}\label{5.eqNabla}%
\abs{\nabla L_f}^2 = 4t_1\abs{X}^2 = - 4t_1S +
4t_1\big(S+\abs{X}^2\big).
\end{equation}
Moreover, we compute
\begin{equation}\label{5.eqDt}
\begin{split}
\dte L_f(q,t_1) &=\tfrac{d}{dt_1}L_f(q,t_1) - \D_X L_f(q,t_1) =
\sqrt{t_1}\big(S+\abs{X}^2\big) - \scal{\nabla
L_f(q,t_1),X}\\%
&=\sqrt{t_1}\big(S+\abs{X}^2\big) - 2\sqrt{t_1}\abs{X}^2 =
2\sqrt{t_1}S - \sqrt{t_1} \big(S+\abs{X}^2\big).%
\end{split}
\end{equation}
Note that $\dte$ denotes the partial derivative with respect to
$t_1$ keeping the point $q$ fixed, while $\frac{d}{dt_1}$ refers to
differentiation along an $\sL$-geodesic, i.e.~simultaneously varying
the time $t_1$ and the point $q$. Next, we determine
$\big(S+\abs{X}^2\big)$ in terms of $L_f$. With the Euler-Lagrange
equation (\ref{4.eqGeo1}), we get
\begin{align*}
\tfrac{d}{dt} \big(S(\gamma(t))+\abs{X(t)}^2\big)
&=\dt S + \D_X S + 2\scal{\D_X X,X} \ham 2\sS(X,X)\\%
&=\dt S + 2\scal{\nabla S,X} - \tfrac{1}{t}\abs{X}^2 \hap
2\sS(X,X)\\%
&=\hap \sH(\sS,\ham X) - \tfrac{1}{t}\big(S+\abs{X}^2\big).
\end{align*}
From this we obtain
\begin{equation}\label{5.eqNotation}
t^{3/2}\tfrac{d}{dt}\big(S+\abs{X}^2\big) = \hap t^{3/2}\sH(\sS,\ham
X) - \sqrt{t}\big(S+\abs{X}^2\big)
\end{equation}
and thus by integrating and using the notation $K = \int_0^{t_1}
t^{3/2}\sH(\sS,\ham X) dt $, we conclude
\begin{align*}
\hap K - L_f(q,t_1) &=\int_0^{t_1} t ^{3/2}
\tfrac{d}{dt}\big(S+\abs{X}^2\big) dt\\%
&=t_1^{3/2} \big(S(\gamma(t_1))+\abs{X(t_1)}^2\big) - \int_0^{t_1}
\tfrac{3}{2}\sqrt{t}\big(S+\abs{X}^2\big) dt\\%
&=t_1^{3/2}\left(S+\abs{X}^2\right) - \tfrac{3}{2}L_f(q,t_1).
\end{align*}
Hence, we have%
\begin{equation}
t_1^{3/2}\big(S+\abs{X}^2\big) = \hap K + \tfrac{1}{2}L_f(q,t_1).
\end{equation}
If we insert this into (\ref{5.eqNabla}) and (\ref{5.eqDt}), we get
(\ref{5.eq1}) and (\ref{5.eq2}), respectively.
\end{proof}
To compute the second variation of $\sL_f(\gamma)$, we use the
following claim.
\paragraph{Claim 1:} Under the flow $\dt g_{ij} = \ham 2S_{ij}$, we have
\begin{equation}
\begin{split}
\dt \scal{\D_Y Y,X} &=\scal{\D_X \D_Y Y,X}+\scal{\D_Y Y,\D_X
X} \ham 2\sS(\D_Y Y,X)\\%
& \quad \ham 2(\D_Y \sS)(Y,X) \hap (\D_X \sS)(Y,Y).%
\end{split}
\end{equation}
\begin{proof}
We start with
\begin{equation}\label{5.eqClaim}
\dt \scal{\D_Y Y,X} =\scal{\D_X \D_Y Y,X}+\scal{\D_Y Y,\D_X X} \ham
2\sS(\D_Y Y,X) + \langle\dot{\D}_Y Y,X\rangle,
\end{equation}
where $\dot{\D} := \dt \D$. From \cite{Muller:Harnack}, page 21, we
know that under the flow $\dt g = h$, we have
\begin{equation*}
\langle\dot{\D}_U V,W\rangle = \tfrac{1}{2}(\D_U h)(V,W) -
\tfrac{1}{2}(\D_W h)(U,V) +\tfrac{1}{2} (\D_V h)(U,W).
\end{equation*}
Hence, with $U=V=Y$, $W=X$ and $h= \ham 2\sS$, we get
\begin{equation*}
\langle\dot{\D}_Y Y,X\rangle = \ham 2(\D_Y \sS)(Y,X) \hap (\D_X
\sS)(Y,Y).
\end{equation*}
Inserting this into (\ref{5.eqClaim}) proves the claim.
\end{proof}
Using Claim 1, we can now write $2\scal{\D_Y \D_X Y,X}$ as
\begin{align*}
2\scal{\D_Y \D_X Y,X} &=2\scal{\D_X \D_Y Y,X} +
2\scal{\Rm(Y,X)Y,X}\\%
&=2\dt \scal{\D_Y Y,X} - 2 \scal{\D_Y Y,\D_X X} \hap 4\sS(\D_Y Y,X)\\%
&\quad \hap 4(\D_Y \sS)(Y,X) \ham 2(\D_X \sS)(Y,Y) +
2\scal{\Rm(Y,X)Y,X},
\end{align*}
and a partial integration yields
\begin{equation}\label{5.eq9}
\begin{split}
\int_0^{t_1} 2\sqrt{t}\scal{\D_Y \D_X Y,X} dt &=2\sqrt{t}\scal{\D_Y
Y,X}\big{|}_0^{t_1} - \int_0^{t_1} \sqrt{t}\,\tfrac{1}{t}\scal{\D_Y Y,X} dt\\%
&\quad - \int_0^{t_1} 2\sqrt{t}\scal{\D_Y Y,\D_X X
\ham 2\sS(X,\cdot)} dt\\%
&\quad \hap \int_0^{t_1} \sqrt{t}\left(4(\D_Y \sS)(Y,X)
- 2(\D_X \sS)(Y,Y)\right)dt\\%
&\quad + \int_0^{t_1} 2\sqrt{t} \scal{\Rm(Y,X)Y,X}
dt.%
\end{split}
\end{equation}
If the geodesic equation (\ref{4.eqGeo1}) holds, we can write the
first two integrals on the right hand side of (\ref{5.eq9}) as
\begin{equation*}
-2\int_0^{t_1} \sqrt{t}\scal{\D_Y Y,\tfrac{1}{2t}X + \D_X X \ham
2\sS(X,\cdot)} dt  = -\int_0^{t_1}
\sqrt{t}\scal{\D_Y Y,\nabla S} dt,%
\end{equation*}
and equation (\ref{5.eq9}) becomes
\begin{equation}\label{5.eq10}
\begin{split}
\int_0^{t_1} 2\sqrt{t}\scal{\D_Y \D_X Y,X} dt &=2\sqrt{t}\scal{\D_Y
Y,X}\big{|}_0^{t_1} - \int_0^{t_1} \sqrt{t}\scal{\D_Y Y,\nabla S} dt\\%
&\quad \hap \int_0^{t_1} \sqrt{t}\left(4(\D_Y \sS)(Y,X)
- 2(\D_X \sS)(Y,Y)\right)dt\\%
&\quad + \int_0^{t_1} 2\sqrt{t} \scal{\Rm(Y,X)Y,X} dt.
\end{split}
\end{equation}
We can now compute the second variation of $\sL_f(\gamma)$ for
$\sL_f$-geodesics $\gamma$ where $G_f(X)=0$ is satisfied. Using the
first variation
\begin{equation*}
\delta_Y \sL_f(\gamma) = \int_{t _0}^{t_1} \sqrt{t}
(\D_Y S + 2\scal{\D_Y X,X}) dt%
\end{equation*}
from the last section, we compute
\begin{equation}\label{5.eq11}
\begin{split}
\delta_Y^2 \sL_f(\gamma) &=\int_0^{t_1} \sqrt{t} \big(\ds
\scal{\nabla
S,Y} + 2\scal{\D_Y \D_Y X,X}+2\abs{\D_Y X}^2\big) dt\\%
&=\int_0^{t_1} \sqrt{t} \big(\scal{\nabla S,\D_Y Y}
+\D_Y \D_Y S +2\abs{\D_X Y}^2 + 2\scal{\D_Y \D_X Y,X}\big) dt\\%
&=2\sqrt{t}\scal{\D_Y Y,X}\big{|}_0^{t_1} + \int_0^{t_1}
\sqrt{t}\left(\D_Y \D_Y S +2\abs{\D_X Y}^2\right) dt\\%
&\quad \ham \int_0^{t_1} \sqrt{t}(2(\D_X \sS)(Y,Y)- 4(\D_Y
\sS)(Y,X)) dt\\%
&\quad + \int_0^{t_1} 2\sqrt{t}\scal{\Rm(Y,X)Y,X} dt,
\end{split}
\end{equation}
where we used (\ref{5.eq10}) in the last step. Now choose the test
variation $Y(t)$ such that
\begin{equation}\label{5.eqODE}%
\D_X Y = \hap \sS(Y,\cdot) + \tfrac{1}{2t}Y,
\end{equation}
which implies $\dt\abs{Y}^2 = \ham 2 \sS(Y,Y) + 2\scal{\D_X Y,Y} =
\frac{1}{t}\abs{Y}^2$ and hence $\abs{Y(t)}^2 = t  / t_1$, in
particular $Y(0)=0$. We have
\begin{equation}\label{5.eqHess}
\begin{split}
\Hess_{L_f}(Y,Y) &=\D_Y \D_Y L_f = \delta_Y^2(L_f) -
\scal{\D_Y Y,\nabla L_f}\\%
&\leq \delta_Y^2{\sL_f} - 2\sqrt{t_1}\scal{\D_Y Y,X}(t_1),
\end{split}
\end{equation}
where the $Y$ in $\Hess_{L_f}(Y,Y) = \D_Y \D_Y L_f$ denotes a vector
$Y(t_1) \in T_q M$, while in $\delta_Y^2\sL_f$ it denotes the
associated variation of the curve, i.e.~the vector field $Y(t)$
along $\gamma$ which solves the above ODE (\ref{5.eqODE}). Note that
(\ref{5.eqHess}) holds with equality if $Y$ is an $\sL_f$-Jacobi
field. We obtain
\begin{equation}\label{5.eq14}
\begin{split}
\Hess_{L_f}(Y,Y) &\leq \int_0^{t_1} \sqrt{t}\big(\D_Y \D_Y
S +2\abs{\D_X Y}^2 + 2\scal{\Rm(Y,X)Y,X}\big) dt\\%
&\quad \ham \int_0^{t_1} \sqrt{t}(2(\D_X \sS)(Y,Y)- 4(\D_Y
\sS)(Y,X)) dt.
\end{split}
\end{equation}
\begin{lemma}\label{5.lemma2}%
For $K$ defined as in Lemma \ref{5.lemma1}, and under the assumption
$\sD(\sS,Z)\geq 0, \; \forall Z \in \Gamma(TM)$, the distance
function $L_f(q,t_1)$ satisfies
\begin{equation}\label{5.eq15}%
\Lap L_f(q,t_1) \leq \frac{n}{\sqrt{t_1}} \hap 2\sqrt{t_1}S -
\frac{1}{t_1}K.
\end{equation}
\end{lemma}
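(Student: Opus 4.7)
The plan is to trace the Hessian bound~(\ref{5.eq14}) against an orthonormal basis $\{e_i\}$ of $T_qM$ at time $t_1$, using the test vector fields $Y_i(t)$ along a minimizing $\sL_f$-geodesic $\gamma$ from $p$ to $q$ that solve the ODE~(\ref{5.eqODE}) with $Y_i(t_1)=e_i$. A polarization of the calculation that produced $\abs{Y_i}^2 = t/t_1$ yields $\scal{Y_i,Y_j}(t) = (t/t_1)\delta_{ij}$, so $\{\sqrt{t_1/t}\,Y_i(t)\}$ is an orthonormal frame at each intermediate time. Every tensor in~(\ref{5.eq14}) then traces to the factor $t/t_1$ times its full trace; in particular $\sum_i\abs{\D_XY_i}^2 = (t/t_1)\abs{\sS}^2 \hap S/t_1 + n/(4tt_1)$ after expanding via the ODE, and Lemma~\ref{1.lemmaD} gives $\sum_i\sH(\sS,X,Y_i) = (t/t_1)[\sD(\sS,X) + \sH(\sS,X)]$.

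Substituting these traces and reorganizing by means of the elementary derivative identity $2\tfrac{d}{dt}[t^{3/2}S] = 3\sqrt{t}S + 2t^{3/2}[\dt S + \D_X S]$ valid along the curve, I expect the summed integrand to split as
\begin{equation*}
\sum_i J_i(t) = -\sqrt{t}\sum_i\sH(\sS,X,Y_i) \;\hap\; \tfrac{2}{t_1}\tfrac{d}{dt}\bigl[t^{3/2}S\bigr] + \tfrac{n}{2\sqrt{t}\,t_1} + R(t),
\end{equation*}
where $R(t)$ is a stray remainder proportional to $(t^{3/2}/t_1)[(\D_iS_{ij})X_j - \D_XS]$ that is nonzero only in the forwards case. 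Upon integration, the total-derivative contributes $\hap 2\sqrt{t_1}S$ at the top endpoint (the $t=0$ endpoint vanishes), the algebraic piece yields $n/\sqrt{t_1}$, and the $\sH$-trace produces $-(1/t_1)\int_0^{t_1}t^{3/2}[\sD(\sS,X)+\sH(\sS,X)]\,dt$.

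In the backwards case $\ham X = X$ and $R \equiv 0$, so the $\sH$-integral is directly $-K/\tau_1 - (1/\tau_1)\int_0^{\tau_1}\tau^{3/2}\sD(\sS,X)\,d\tau$, and dropping the nonpositive $\sD$-integral (justified by $\sD(\sS,X)\geq 0$) yields~(\ref{5.eq15}). In the forwards case, the elementary identities $\sH(\sS,-X) - \sH(\sS,X) = 4\D_XS$ and $\sD(\sS,-X) - \sD(\sS,X) = -2[4(\D_iS_{ij})X_j - 2\D_XS]$ rewrite $\sD(\sS,X)+\sH(\sS,X)$ as $\sD(\sS,-X)+\sH(\sS,-X) + 8[(\D_iS_{ij})X_j - \D_XS]$; the last linear-in-$X$ piece exactly cancels the stray $R$, leaving $-K/t_1 - (1/t_1)\int_0^{t_1}t^{3/2}\sD(\sS,-X)\,dt$, and dropping the last nonpositive piece (justified by $\sD(\sS,-X)\geq 0$) again yields~(\ref{5.eq15}). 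The main obstacle is the sign bookkeeping in the forwards case; conceptually, the ``second Bianchi'' piece $4(\D_iS_{ij})X_j-2\D_XS$ built into $\sD$ is precisely what absorbs the stray $R$, so the sign hypothesis must be invoked at $\ham X$ rather than at $X$ itself.
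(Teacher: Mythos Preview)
Your proof is correct and uses the same ingredients as the paper: the test fields $Y_i$ solving~(\ref{5.eqODE}), the observation that $\{\sqrt{t_1/t}\,Y_i\}$ remains orthonormal, and Lemma~\ref{1.lemmaD} to replace the traced matrix Harnack by $\sD+\sH$. The only organizational difference is that the paper manipulates~(\ref{5.eq14}) for a \emph{single} $Y$ first, integrating by parts via $\int_0^{t_1}\sqrt{t}\bigl[2\tfrac{d}{dt}\sS(Y,Y)+\tfrac{1}{t}\sS(Y,Y)\bigr]dt = 2\sqrt{t_1}\sS(Y,Y)(t_1)$ so that $\sH(\sS,\ham X,Y)$ emerges directly with the correct sign $\ham X$, and only then traces; this avoids your stray remainder $R$ and the subsequent sign-swap identities altogether. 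Your route---tracing first against $\tfrac{d}{dt}[t^{3/2}S]$---initially produces $\sD(\sS,X)+\sH(\sS,X)$ rather than $\sD(\sS,\ham X)+\sH(\sS,\ham X)$, which is what forces the extra algebraic correction in the forwards case.
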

\begin{proof}
Note that with (\ref{5.eqODE}) we find
\begin{equation}\label{5.eq16}
\begin{split}
\abs{\D_X Y}^2 &=\abs{\sS(Y,\cdot)}^2 \hap \tfrac{1}{t}\sS(Y,Y)
+ \tfrac{1}{4t ^2}\abs{Y(t)}^2\\%
&=\abs{\sS(Y,\cdot)}^2 \hap \tfrac{1}{t}\sS(Y,Y) +
\tfrac{1}{4t\,t_1},%
\end{split}
\end{equation}
as well as
\begin{equation}\label{5.eq17}
\begin{split}
\tfrac{d}{dt}\sS(Y(t),Y(t)) &=(\dt \sS)(Y,Y) + (\D_X
\sS)(Y,Y) + 2\sS(\D_X Y,Y)\\%
&=(\dt \sS)(Y,Y) + (\D_X \sS)(Y,Y)+ \tfrac{1}{t}\sS(Y,Y) \hap
2\abs{\sS(Y,\cdot)}^2.
\end{split}
\end{equation}
Using (\ref{5.eq16}), a partial integration and then (\ref{5.eq17}),
we get from (\ref{5.eq14})
\begin{align*}
\Hess_{L_f}(Y,Y) &\leq \int_0^{t_1} \sqrt{t}\big(\D_Y \D_Y
S + 2\scal{\Rm(Y,X)Y,X}\big) dt\\%
&\quad \ham \int_0^{t_1} \sqrt{t}\big(2(\D_X \sS)(Y,Y)-
4(\D_Y \sS)(Y,X)\big) dt\\%
&\quad + \int_0^{t_1} \sqrt{t}\big(2\abs{\sS(Y,\cdot)}^2 \hap
\tfrac{2}{t}\sS(Y,Y) + \tfrac{1}{2t\,t_1}\big) dt\\%
&= \frac{1}{\sqrt{t_1}}-\int_0^{t_1} \sqrt{t}\,\sH(\sS,\ham X,Y) dt
\hap \int_0^{t_1} \sqrt{t}\big(2(\D_X
\sS)(Y,Y) \hap 4\abs{\sS(Y,\cdot)}^2\big) dt\\%
&\quad \hap \int_0^{t_1} \sqrt{t}\big(\tfrac{3}{t}\sS(Y,Y)+
2(\dt\sS)(Y,Y)\big) dt\\%
&=\frac{1}{\sqrt{t_1}} -\int_0^{t_1} \sqrt{t}\,\sH(\sS,\ham X,Y)dt
\hap \int_0^{t_1} \sqrt{t}\big(2\tfrac{d}{dt}\sS(Y,Y)
+ \tfrac{1}{t}\sS(Y,Y)\big) dt\\%
&=\frac{1}{\sqrt{t_1}} \hap 2\sqrt{t_1}\sS(Y,Y) - \int_0^{t_1}
\sqrt{t}\,\sH(\sS,\ham X,Y) dt.
\end{align*}
Here, $\sH(\sS,\ham X,Y)$ denotes the Harnack type expression from
Definition \ref{1.defH} evaluated at time $t$. Remember that in the
backwards case $\sH(\sS,X,Y)$ has to be interpreted as in
(\ref{4.defH2}). Now let $\{Y_i(t_1)\}$ be an orthonormal basis of
$T_q M$, and define $Y_i(t)$ as above, solving the ODE
(\ref{5.eqODE}). We compute
\begin{align*}
\dt \scal{Y_i,Y_j} &= \ham 2\sS(Y_i,Y_j) + \scal{\D_X Y_i,Y_j} +
\scal{Y_i,\D_X Y_j}\\%
&= \ham 2\sS(Y_i,Y_j) + \scal{\hap \sS(Y_i,\cdot) +
\tfrac{1}{2t}Y_i,Y_j} + \scal{Y_i,\hap \sS(Y_j,\cdot) +
\tfrac{1}{2t}Y_j}\\%
&= \tfrac{1}{t}\scal{Y_i,Y_j}.
\end{align*}
Thus the $\{Y_i(t)\}$ are orthogonal with $\scal{Y_i(t),Y_j(t)} =
\frac{t}{t_1}\scal{Y_i(t_1),Y_j(t_1)} = \frac{t}{t_1}\delta_{ij}$.
In particular, there exist orthonormal vector fields $e_i(t)$ along
$\gamma$ with $Y_i(t) = \sqrt{t/t_1}\,e_i(t)$. Summing over
$\{e_i\}$ yields
\begin{align*}
\Lap L_f(q,t_1) &\leq \sum_i\Big(\frac{1}{\sqrt{t_1}} \hap
2\sqrt{t_1}\sS(Y_i,Y_i) - \int_0^{t_1}
\sqrt{t}\,\sH(\sS,\ham X,Y_i) dt\Big)\\%
&=\frac{n}{\sqrt{t_1}} \hap 2\sqrt{t_1}S - \frac{1}{t_1}
\int_0^{t_1} t^{3/2} \sum_i \sH(\sS,\ham X,e_i) dt\\%
&=\frac{n}{\sqrt{t_1}} \hap 2\sqrt{t_1}S -
\frac{1}{t_1} \int_0^{t_1} t^{3/2}\big(\sH(\sS,\ham X)+\sD(\sS,\ham X)\big)dt\\%
&\leq\frac{n}{\sqrt{t_1}} \hap 2\sqrt{t_1}S - \frac{1}{t_1} K,%
\end{align*}
using Lemma \ref{1.lemmaD} and the assumption $\sD(\sS,\ham X)\geq 0$.%
\end{proof}
The three formulas from Lemma \ref{5.lemma1} and Lemma
\ref{5.lemma2} can now be combined to one evolution inequality for
the reduced distance function $\ell_f(q,t_1) =
\frac{1}{2\sqrt{t_1}}L_f(q,t_1)$. From (\ref{5.eq1}), (\ref{5.eq2})
and (\ref{5.eq15}), we get
\begin{align*}
\abs{\D\ell_f}^2 &= \frac{1}{4t_1}\abs{\D L_f}^2 = -S +
\frac{1}{t_1}\ell_f \hap \frac{1}{t_1^{3/2}}K,\\%
\dte \ell_f &= -\frac{1}{4t_1^{3/2}}L_f + \frac{1}{2\sqrt{t_1}}\dte
L_f= -\frac{1}{t_1}\ell_f + S \ham \frac{1}{2t_1^{3/2}}K,\\%
\Lap \ell_f &= \frac{1}{2\sqrt{t_1}}\Lap L_f \leq \frac{n}{2t_1}
\hap S - \frac{1}{2t_1^{3/2}}K,
\end{align*}
and thus
\begin{equation}\label{5.eqTogether}
\Lap \ell_f \hap \dte \ell_f \hap \abs{\D\ell_f}^2 \ham S -
\tfrac{n}{2t} \leq 0.
\end{equation}
This is equivalent to
\begin{equation}\label{5.eqHeateq}
(\dt \hap\, \Lap \ham S)v_f(q,t) \leq 0,
\end{equation}
where $v_f(q,t):= (4\pi t)^{-n/2}e^{\hap \ell_f(q,t)}$ is the
density function for the reduced volume $V_f(t)$.\\\\%
Note that so far we pretended that $L_f(q,t_1)$ is smooth. In the
general case, it is obvious that the inequality (\ref{5.eqTogether})
holds in the classical sense at all points where $L_f$ is smooth.
But what happens at all the other points? This question is answered
by the following lemma.
\begin{lemma}
The inequality (\ref{5.eqTogether}) holds on $M\times (0,T)$ in the
\emph{barrier sense}, i.e.~for all $(q_*,t_*)\in M\times(0,T)$ there
exists a neighborhood $U$ of $q_*$ in $M$, some $\eps > 0$ and a
smooth upper barrier $\tilde{\ell}_f$ defined on
$U\times(t_*-\eps,t_*+\eps)$ with $\tilde{\ell}_f \geq \ell_f$ and
$\tilde{\ell}_f(q_*,t_*)=\ell_f(q_*,t_*)$ which satisfies
(\ref{5.eqTogether}). Moreover, (\ref{5.eqTogether}) holds on
$M\times(0,T)$ in the \emph{distributional sense}.
\end{lemma}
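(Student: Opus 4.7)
The plan is to produce, for each $(q_*,t_*) \in M \times (0,T)$, an explicit smooth upper barrier of the form $\tilde{\ell}_f = \tilde L_f / (2\sqrt{t})$, where $\tilde L_f$ is the $\sL_f$-length of a carefully chosen smooth family of $\sL_f$-geodesics ending near $(q_*,t_*)$. Once such a $\tilde\ell_f$ is in hand, the pointwise inequality (\ref{5.eqTogether}) at the contact point should follow by re-running the variational identities from Lemma~\ref{5.lemma1} and the second-variation estimate from Lemma~\ref{5.lemma2}, specialized to this family. The distributional statement should then follow from the pointwise barrier statement by a standard Calabi-type approximation argument that uses the local Lipschitz continuity of $\ell_f$ already established in Lemma~\ref{4.lemmaLip}.

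For the construction I would fix a minimizing $\sL_f$-geodesic $\bar\gamma:[0,t_*]\to M$ from $p$ to $q_*$, whose existence is guaranteed by the discussion in Section~\ref{Sect4}, and choose $\delta \in (0,t_*)$ sufficiently close to $t_*$ so that $\bar\gamma|_{[\delta,t_*]}$ carries no point conjugate to $p' := \bar\gamma(\delta)$ along itself. Such a $\delta$ exists because the $\sL_f$-Jacobi equation differs from the Riemannian Jacobi equation only by bounded lower-order terms, so a Sturm-type comparison shows that short enough portions of any smooth $\sL_f$-geodesic are free of conjugate points. With this choice the $\sL_f$-exponential map from the interior basepoint $(p',\delta)$ is a smooth local diffeomorphism near the initial velocity $\dot{\bar\gamma}(\delta)$, and the inverse function theorem supplies a smooth family of $\sL_f$-geodesics $\eta_{(q,t)}:[\delta,t]\to M$ ending at $(q,t)$ for all $(q,t)$ in a small neighborhood $U \times (t_*-\eps,t_*+\eps)$ of $(q_*,t_*)$. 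Concatenating the fixed initial segment $\bar\gamma|_{[0,\delta]}$ with $\eta_{(q,t)}$ yields a smooth family of admissible curves from $p$ to $q$, whose total $\sL_f$-length $\tilde L_f(q,t)$ is smooth, agrees with $L_f(q_*,t_*)$ at the contact point by minimality of $\bar\gamma$, and satisfies $\tilde L_f \geq L_f$ in the neighborhood because $L_f$ is an infimum; hence $\tilde\ell_f$ is a genuine smooth upper barrier.

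To verify~(\ref{5.eqTogether}) for $\tilde\ell_f$ at $(q_*,t_*)$, I would note that since $\bar\gamma = \eta_{(q_*,t_*)}$ is a bona fide $\sL_f$-geodesic, the first-variation computation in Lemma~\ref{5.lemma1} applies to $\tilde L_f$ and reproduces~(\ref{5.eq1}) and~(\ref{5.eq2}) verbatim with the same quantity $K$ along $\bar\gamma$. For the Hessian bound, I would select the smooth family so that its natural variation fields along $\bar\gamma$ agree at the contact point with the Jacobi-type test fields obtained by solving the ODE~(\ref{5.eqODE}); the second-variation calculation of Lemma~\ref{5.lemma2} then yields the same upper bound~(\ref{5.eq14}) for $\Hess_{\tilde L_f}$. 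Tracing over an orthonormal basis and invoking both the hypothesis $\sD(\sS,\ham X)\geq 0$ and Lemma~\ref{1.lemmaD} produces the Laplacian estimate~(\ref{5.eq15}) for $\tilde L_f$, and combining the three pointwise estimates exactly as in the smooth derivation of~(\ref{5.eqTogether}) establishes the barrier inequality.

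For the distributional claim, the principle I would invoke is that any locally Lipschitz function which admits at every point a smooth upper barrier satisfying a linear second-order parabolic inequality of the form~(\ref{5.eqTogether}) also satisfies that inequality in the sense of distributions; concretely, for any nonnegative test function $\phi \in C_c^\infty(M\times(0,T))$ one localizes via a partition of unity, mollifies, applies the barrier inequality at the smooth points of full measure guaranteed by Lemma~\ref{4.lemmaLip}, and passes to the limit, the contributions on the measure-zero singular set being controlled by the pointwise barriers. The main obstacle in the whole argument is the geometric step of producing the smooth family of comparison $\sL_f$-geodesics \emph{together with} Jacobi-type variations matching those of~(\ref{5.eqODE}), especially when $q_*$ is a conjugate or non-unique minimizing endpoint of $\bar\gamma$; shifting the effective basepoint inward from the singular time $t=0$ to $(p',\delta)$ with $\delta$ close to $t_*$ simultaneously sidesteps the initial-time singularity intrinsic to the $\sL_f$-functional and the local cut-locus obstruction, after which everything reduces to a careful bookkeeping of the variational formulas already derived.
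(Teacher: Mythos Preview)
Your construction has a genuine gap in the Hessian step. You commit to building the barrier from the $\sL_f$-exponential map at the shifted basepoint $(p',\delta)=(\bar\gamma(\delta),\delta)$, so that $\tilde L_f = \sL_f(\bar\gamma|_{[0,\delta]}) + L_f^{p',\delta}$ with $L_f^{p',\delta}$ the smooth $\sL_f$-distance from $(p',\delta)$. The variation fields of \emph{this} family are then forced to be $\sL_f$-Jacobi fields on $[\delta,t_*]$ vanishing at $\delta$; you cannot subsequently ``select the smooth family so that its natural variation fields agree with the Jacobi-type test fields obtained by solving the ODE~(\ref{5.eqODE})''. Those ODE solutions satisfy $|Y_i(t)|^2=t/t_*$ and vanish only at $t=0$, so they are not admissible competitors for the index form based at $\delta>0$, and the computation leading to (\ref{5.eq14}) and (\ref{5.eq15}) does not transfer. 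Concretely, with your choice of $\delta$ close to $t_*$ the segment $[\delta,t_*]$ is short and $\Lap L_f^{p',\delta}(q_*,t_*)$ is large (of order $(t_*-\delta)^{-1/2}$ up to constants), so your barrier $\tilde\ell_f$ violates (\ref{5.eqTogether}) rather than satisfying it. Shifting $\delta$ toward $0$ instead (the usual Calabi direction) does not rescue the argument either, because the ODE~(\ref{5.eqODE}) and the cancellations producing the $n/\sqrt{t_1}$ term in (\ref{5.eq15}) are tied specifically to the singular basepoint $t=0$; for $\delta>0$ the only ODE solution with $Y(\delta)=0$ is trivial.

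The paper sidesteps this entirely by \emph{not} using $\sL_f$-geodesics for the comparison family. It extends the minimal geodesic $\gamma$ slightly past $t_*$, solves (\ref{5.eqODE}) on $[0,t_*+\eps]$ to get the test fields $Y_i$, builds $n$ one-parameter variations $\gamma_i(s,\cdot)$ of $\gamma$ with $\partial_s\gamma_i|_{s=0}=Y_i$, and then chooses an arbitrary smooth $(n{+}1)$-parameter family $\eta_{q,t_1}$ of curves from $(p,0)$ to $(q,t_1)$ satisfying $\eta_{\gamma_i(s,t),t}=\gamma_i(s,\cdot)|_{[0,t]}$. Only the central curve $\eta_{q_*,t_*}=\gamma$ is a geodesic; the neighboring curves need not be. Because the variation fields of this family \emph{are} the $Y_i$ by construction, the second-variation formula (\ref{5.eq11}) and hence (\ref{5.eq14}) apply verbatim to $\tilde L_f=\sL_f(\eta_{q,t_1})$, with equality in (\ref{5.eqHess}). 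This is the missing idea: build the barrier around the test fields rather than around an exponential map.
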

\begin{proof}
Given $(q_*,t_*)\in M\times(0,T)$, let $\gamma:[0,t_*]\to M$ be a
minimal $\sL_f$-geodesic from $p$ to $q_*$, so that
$\ell_f(q_*,t_*)=\frac{1}{2\sqrt{t_*}}\sL_f(\gamma)$. Extend
$\gamma$ to a smooth $\sL_f$-geodesic $\gamma:[0,t_*+\eps]\to M$ for
some $\eps>0$. For a given orthonormal basis $\{Y_i(t_*)\}$ of
$T_{q_*}M$, solve the ODE (\ref{5.eqODE}) on $[0,t_*+\eps]$ and let
$\gamma_i(s,t)$ be a variation of $\gamma(t)$ in the direction of
$Y_i$, i.e.~$\gamma_i(0,t)=\gamma(t)$ and $\ds \gamma_i(s,t)|_{s=0}
=Y_i(t)$. Finally, for a small neighborhood $U$ of $q_*$ we choose a
smooth family of curves $\eta_{q,t_1}:[0,t_1]\to M$ from
$\eta_{q,t_1}(0)=p$ to $\eta_{q,t_1}(t_1)=q \in U$,
$t_1\in(t_*-\eps,t_*+\eps)$, with the following property:
\begin{equation*}
\eta_{\gamma_i(s,t),t}=\gamma_i(s,\cdot) |_{[0,t]}, \quad \forall
t\in (t_*-\eps,t_*+\eps) \textrm{ and } \abs{s}< \eps.
\end{equation*}
Define $\tilde{L}_f(q,t_1):=\sL_f(\eta_{q,t_1})$ and
$\tilde{\ell}_f(q,t_1)=\frac{1}{2\sqrt{t_1}}\tilde{L}_f(q,t_1)$. By
construction $\eta_{q_*,t_*}=\gamma |_{[0,t_*]}$ and hence
$\tilde{L}_f(q,t_1)$ is a smooth upper barrier for $L_f(q,t_1)$ with
$\tilde{L}_f(q_*,t_*)=L_f(q_*,t_*)$. Moreover, $\tilde{L}_f$
satisfies the formulas in Lemma \ref{5.lemma1} and Lemma
\ref{5.lemma2}. Thus $\tilde{\ell}_f(q,t_1)$ is a smooth upper
barrier for $\ell_f(q,t_1)$ that satisfies (\ref{5.eqTogether}).\\\\%
To see that (\ref{5.eqTogether}) holds in the distributional sense,
we use the general fact that if a differential inequality of the
type (\ref{5.eq15}) holds in the barrier sense and we have a bound
on $\abs{\D L_f}$, then the inequality also holds in the
distributional sense, see for example \cite{RF:TAI}, Lemma 7.125.
Obviously (\ref{5.eq1}) and (\ref{5.eq2}) also hold in the
distributional sense, since they hold in the barrier sense.
Combining this, the claim from the lemma follows.
\end{proof}
\begin{proof}[Proof of Theorem \ref{1.mainthm}]
Since (\ref{5.eqTogether}) and hence also (\ref{5.eqHeateq}) hold in
the distributional sense, we simply compute, using $\dt dV = \ham S
dV$,
\begin{equation}
\begin{split}
\dt V_f(t) &= \int_M v_f(q,t) \dt dV + \int_M \dt v_f(q,t)dV\\%
&\leq \int v_f(q,t) \cdot (\ham S)dV \hap \int_M (S-\Lap)v_f(q,t) dV\\%
&= \ham \int_M \Lap v_f(q,t) dV = 0.
\end{split}
\end{equation}
Thus, the reduced volume $V_f(t)$ is non-increasing in $t$.
\end{proof}

\makeatletter
\def\@listi{%
  \itemsep=0pt
  \parsep=1pt
  \topsep=1pt}
\makeatother
{\fontsize{10}{11}\selectfont

\vspace{10mm}

Reto M\"uller\\
{\sc Scuola Normale Superiore di Pisa, 56126 Pisa, Italy}
\end{document}